\newtheorem{theorem}{Theorem}[section]
\newtheorem{corollary}[theorem]{Corollary}
\newtheorem{lemma}[theorem]{Lemma}
\newtheorem{definition}[theorem]{Definition}
\numberwithin{equation}{section}
\begin{document}
\title{Absolutely summing multilinear operators via interpolation}
\author[Albuquerque]{N. Albuquerque}
\address{Departamento de Matem\'atica, \\
\indent Universidade Federal da Para\'iba, \\
\indent 58.051-900 - Jo\~ao Pessoa, Brazil.}
\email{ngalbqrq@gmail.com}
\author[N\'u\~nez]{D. N\'u\~nez-Alarc\'on}
\address{Departamento de Matem\'atica, \\
\indent Universidade Federal de Pernambuco, \\
\indent 50.740-560 - Recife, Brazil.}
\email{danielnunezal@gmail.com}
\author[Santos]{J. Santos}
\address{Departamento de Matem\'atica, \\
\indent Universidade Federal da Para\'iba, \\
\indent 58.051-900 - Jo\~ao Pessoa, Brazil.}
\email{joedson@mat.ufpb.br }
\author[Serrano]{D. M. Serrano-Rodr\'iguez}
\address{Departamento de Matem\'atica, \\
\indent Universidade Federal de Pernambuco, \\
\indent 50.740-560 - Recife, Brazil.}
\email{dmserrano0@gmail.com}
\thanks{N. Albuquerque was supported by CAPES; J. Santos was supported by
CNPq (Edital Universal 14/2012)}
\keywords{Absolutely summing operators; Bohnenblust-Hille inequality}

\begin{abstract}
We use an interpolative technique from \cite{abps} to introduce the notion
of multiple $N$-separately summing operators. Our approach extends and
unifies some recent results; for instance we recover the best known
estimates of the multilinear Bohnenblust-Hille constants due to F. Bayart,
D. Pellegrino and J. Seoane-Sep\'ulveda. More precisely, as a consequence of
our main result, for $1\leq t<2$ and $m>1$ we prove that 
\begin{equation*}
\left( \sum_{i_{1},\dots ,i_{m}=1}^{\infty }\left\vert U\left(
e_{i_{1}},\dots ,e_{i_{m}}\right) \right\vert ^{\frac{2tm}{2+(m-1)t}}\right)
^{\frac{2+(m-1)t}{2tm}} \leq \left[\prod_{j=2}^{m}\Gamma \left( 2-\frac{2-t}{%
jt-2t+2}\right) ^{\frac{t(j-2)+2}{2t-2jt}}\right] \left\Vert U\right\Vert
\end{equation*}
for all complex $m$-linear forms $U:c_{0}\times \cdot \cdot \cdot \times
c_{0}\rightarrow \mathbb{C}$.
\end{abstract}

\maketitle

\section{Introduction and preliminaries}

We use standard notations and notions from Banach space theory as, \emph{e.g.}, in \cite{djt}.
The Banach spaces $X_{1},...,X_{m},X,Y$ are considered over the scalar field 
$\mathbb{K}$, with $\mathbb{K}$ be $\mathbb{R}$ or $\mathbb{C}$. A
continuous linear operator between Banach spaces $u:X\rightarrow Y$ is
absolutely summing when $\left( \left\Vert u\left( x_{j}\right) \right\Vert
\right) _{j\in \mathbb{N}}\in \ell _{1}$ whenever $\left( x_{j}\right)
_{j\in \mathbb{N}}$ is unconditionally summable. The theory of absolutely
summing operators has its origins in the $50$'s with Grothendieck's resum%
\'{e} but only in 1966-67 that the class of summing operators was presented
in its modern form (see \cite{djt,MitiPelc,Pietsch} for more details).

The success of the linear theory of absolutely summing operators motivated
the emergence of a non linear theory. In 1983 A. Pietsch \cite%
{Pietsch_ideals} initiated a research program sketching the roots of the
multilinear theory. Now, the multilinear theory of absolutely summing
operators is a very fruitful field of nonlinear Functional Analysis with
important connections with other fields. We stress, for instance, the
striking advances in the estimates of the Bohnenblust--Hille constants and
its applications to the final solution of the optimal estimate of the Bohr
radius \cite{bps,BH} and in quantum information theory \cite{Mont}.

\vskip.1cm

Let $2\leq q<\infty$. A Banach space $X$ has \emph{cotype} $q$ if there is a
constant $C>0$ such that, no matter how we select finitely many vectors $%
x_{1},\dots,x_{n}\in X$, 
\begin{equation}
\left( \sum_{k=1}^{n}\Vert x_{k}\Vert^{q}\right) ^{\frac{1}{q}}\leq C\left(
\int_{I}\left\Vert \sum_{k=1}^{n}r_{k}(t)x_{k}\right\Vert ^{2}dt\right) ^{%
\frac{1}{2}},  \label{qqq}
\end{equation}
where $I:=[0,1]$ and $r_{k}$ denotes the $k$-th Rademacher function. The
smallest of all these constants is denoted by $C_{q}(X)$ and it is called
the cotype $q$ constant of $X$. In fact, up to the constant $C$ the
definition of cotype can be changed by replacing the $L_{2}$ norm by an $%
L_{p}$ norm in (\ref{qqq}). More precisely:

\begin{theorem}[Kahane Inequality]
Let $0<p,q<\infty $. Then there is a constant $\mathrm{K}_{p,q}>0$ for which 
\begin{equation*}
\left( \int_{I}\left\Vert \sum_{k=1}^{n}r_{k}(t)x_{k}\right\Vert
^{q}dt\right) ^{\frac{1}{q}}\leq \mathrm{K}_{p,q}\left( \int_{I}\left\Vert
\sum_{k=1}^{n}r_{k}(t)x_{k}\right\Vert ^{p}dt\right) ^{\frac{1}{p}},
\end{equation*}%
holds, regardless of the choice of a Banach space $X$ and of finitely many
vectors $x_{1},\dots ,x_{n}\in X$.
\end{theorem}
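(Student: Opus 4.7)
The plan is to split into two regimes according to the relative size of $p$ and $q$. When $q \leq p$, Jensen's inequality on the probability space $(I,dt)$ applied to the convex map $u \mapsto u^{p/q}$ yields $\|S\|_{L_{q}} \leq \|S\|_{L_{p}}$ with constant $\mathrm{K}_{p,q}=1$, where $S(t):=\sum_{k=1}^{n}r_{k}(t)x_{k}$. The substantive case is $q>p$, where a high moment of $\|S\|$ must be controlled by a low one.

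For $q>p$, my approach is to establish that $\|S(\cdot)\|$ is \emph{exponentially integrable} at a rate governed by one of its lower moments. Concretely, I would prove that there exist absolute constants $A,\lambda>0$ for which
\[
\int_{I}\exp\!\bigl(\lambda\|S(t)\|/(A\|S\|_{L_{1}})\bigr)\,dt \;\leq\; 2.
\]
Any such estimate forces all $L_{r}$ norms of $\|S\|$, for $0<r<\infty$, to be finite and mutually comparable with constants depending only on the exponents. Combined with the trivial Jensen direction and the monotone embedding $\|S\|_{L_{1}}\leq\|S\|_{L_{p}}$ for $p\geq 1$ (with a separate reduction in the range $p<1$), this yields $\mathrm{K}_{p,q}$ for arbitrary $0<p,q<\infty$.

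The core technical step behind exponential integrability is a Hoffmann--J{\o}rgensen-type tail bound for Rademacher sums, roughly of the form
\[
\mathrm{m}\bigl(\{t:\|S(t)\|>cM+s\}\bigr) \;\leq\; 2\,\mathrm{m}\bigl(\{t:\|S(t)\|>s\}\bigr)^{2},
\]
where $M$ is a fixed multiple of $\|S\|_{L_{1}}$ and $\mathrm{m}$ is Lebesgue measure on $I$. The derivation exploits the invariance of the joint distribution of $(r_{k}(t))_{k}$ under independent sign flips on disjoint subsets of coordinates: by conditioning on the event $\{\|S\|>s\}$ and coupling with an independent Rademacher copy, one uses Paul L\'evy's maximal inequality together with the triangle inequality to produce the quadratic improvement on the right.

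The principal obstacle is obtaining this tail bound without any scalar moment identities at one's disposal; in a general Banach space one has no analogue of formulas like $\mathbb{E}|\sum r_{k}a_{k}|^{2}=\sum a_{k}^{2}$ that drive the scalar Khintchine inequality. The argument must rely solely on the triangle inequality, the convexity of $\|\cdot\|$, and the distributional symmetries of the Rademacher system. Once the tail bound is in place, integration by parts converts it into the exponential integrability displayed above, and the stated Kahane Inequality follows by a routine computation.
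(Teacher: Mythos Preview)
The paper does not prove the Kahane Inequality at all: it is quoted in the preliminaries as a classical fact (immediately followed by the discussion of the optimal Khintchine constants $\mathrm{A}_{p}^{\mathbb{K}}$), and no proof or even a proof sketch is supplied. So there is nothing in the paper to compare your argument against.

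That said, the route you outline is one of the standard proofs. The split into the Jensen regime $q\leq p$ and the substantive regime $q>p$ is correct, and deriving an exponential tail for $\|S\|$ from a Hoffmann--J{\o}rgensen iteration (via L\'evy's maximal inequality and the sign-symmetry of the Rademacher system) is a well-known path to Kahane's inequality; see, e.g., Ledoux--Talagrand. Two small points you would need to make precise in a full write-up: first, the ``separate reduction in the range $p<1$'' is not entirely routine---the cleanest way is a Paley--Zygmund argument showing $\|S\|_{L_{1}}\leq C_{p}\|S\|_{L_{p}}$ for $0<p<1$, which your exponential tail does provide; second, the displayed tail inequality is schematic and in practice one iterates a bound of the shape $\mathbb{P}(\|S\|>2s+t)\leq 4\,\mathbb{P}(\|S\|>s)^{2}+\mathbb{P}(\max_{k}\|x_{k}\|>t)$, so one must first absorb the $\max_{k}\|x_{k}\|$ term into a multiple of $\|S\|_{L_{1}}$ (which is immediate from symmetry and $\|x_{k}\|\leq\mathbb{E}\|S\|$). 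With those details filled in, your sketch is sound.
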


The previous theorem is a generalization of the Khinchine inequality, which
holds for $q=2$ and $X=\mathbb{K}$. In this case the optimal constants are
known and denoted by $\mathrm{A}_{p}^{\mathbb{K}}$. For real scalars, U.
Haagerup \cite{Ha} proved that 
\begin{equation}
\mathrm{A}_{p}^{\mathbb{R}} = \frac{1}{\sqrt{2}} \left(\frac{\Gamma \left( 
\frac{p+1}{2}\right)}{\sqrt{\pi }}\right) ^{-\frac{1}{p}},\ \ \text{ for }
1.85 \approx p_0 <p<2  \label{kinreal}
\end{equation}
and 
\begin{equation}
\mathrm{A}_{p}^{\mathbb{R}} = 2^{\frac{1}{p}-\frac{1}{2}},\ \ \text{ for }
1\leq p \leq p_{0}\approx 1.85.  \label{kinreal2}
\end{equation}
The exact definition of $p_{0}$ is the following: $p_0 \in (1,2)$ is the
unique real number satisfying 
\begin{equation*}
\Gamma \left( \frac{p_{0}+1}{2}\right) =\frac{\sqrt{\pi }}{2}.
\end{equation*}
For complex scalars, H. K\"{o}nig \cite{Konig} (see also \cite{kk}) using
Steinhaus variables instead of Rademacher functions has shown that 
\begin{equation}
\mathrm{A}_p^\mathbb{C} = \left( \Gamma \left( \frac{p+2}{2}\right)
\right)^{-\frac{1}{p}}\ \ \text{ for } 1 \leq p <2.  \label{kincomp}
\end{equation}

\vskip.3cm

The \emph{weak $\ell_{1}$-norm} of vectors $x_{1},\dots,x_{n}$ in a Banach
space $X$ is defined by 
\begin{equation*}
\Vert\left( x_{i}\right) _{i=1}^{n} \Vert_{w,1} := \sup_{\|x^{\prime
}\|_{X^{\prime}}\leq1} \sum_{i=1}^{n} \vert x^{\prime}(x_{i})\vert.
\end{equation*}

From now on $X,X_{1},\dots ,X_{m},Y$ will denote Banach spaces. By $\mathcal{%
L}\left( X_{1},\dots ,X_{m};Y\right) $ denote the Banach space of all
(bounded) $m$-linear operators $U:X_{1}\times \cdots \times X_{m}\rightarrow
Y$. For $1\leq r<\infty ,\,U\in \mathcal{L}\left( X_{1},\dots
,X_{m};Y\right) $ is called \emph{multiple $(r,1)$-summing}, if there exists
a constant $C>0$ such that 
\begin{equation*}
\left( \sum_{i_{1},\dots ,i_{m}=1}^{n}\left\Vert U\left(
x_{i_{1}}^{(1)},\dots ,x_{i_{m}}^{(m)}\right) \right\Vert _{Y}^{r}\right) ^{%
\frac{1}{r}}\leq C\prod_{k=1}^{m}\left\Vert \left( x_{i}^{(k)}\right)
_{i=1}^{n}\right\Vert _{w,1}
\end{equation*}%
for all finite choice of vectors $x_{i}^{(k)}\in X_{k},\,1\leq i\leq
n,\,1\leq k\leq m$. The vector space of all multiple $(r,1)$-summing
operators is denoted by $\Pi _{(r,1)}^{m}\left( X_{1},\dots ,X_{m};Y\right) $%
. The infimum, $\pi _{\left( r,1\right) }^{m}(U)$, taken over all possible
constants $C$ satisfying the previous inequality defines a complete norm in $%
\Pi _{\left( r,1\right) }^{m}\left( X_{1},\dots ,X_{m};Y\right) $.

We need to recall some useful multi-index notation: for a positive integer $%
n $ and a finite subset $D \subset \mathbb{N}$, we denote by $|D|$ the
cardinality of $D$ and define the index set 
\begin{equation*}
\mathcal{M}(D,n) := \left\{ \mathbf{i}=(i_k)_{k\in D} \in \mathbb{N}^{|D|}
;\ i_k \in\{1,\dots,n\} \ \text{for each} \ k\in D \right\}.
\end{equation*}
Futher, $\mathcal{P}_{k}(D)$ will denote the set of subsets of $D$ with
cardinality $k$, for $1\leq k\leq |D|$. When $D=\{1,\dots,m\}$, we will
simply write 
\begin{equation*}
\mathcal{M}(m,n) := \mathcal{M}\left(\{1,\dots,m\},n\right) = \left\{ 
\mathbf{i}=(i_{1},\dots,i_{m}) \in \mathbb{N}^m ;\ i_{1},\dots,i_{m}
\in\{1,\dots,n\}\right\}
\end{equation*}
and 
\begin{equation*}
\mathcal{P}_{k}(m) := \mathcal{P}_{k}\left(\{1,\dots,m\}\right).
\end{equation*}
For $S = \{ s_{1},\dots,s_{k}\} \in \mathcal{P}_{k}(m)$, its complement will
be $\widehat{S}:=\{1,\dots,m\} \setminus S$ and $\mathbf{i}_{S}$ shall mean $%
(i_{s_{1}},\dots,i_{s_{k}})\in \mathcal{M}(k,n)$.

The following well-known lemmata will be useful along this paper (we refer
to \cite[Lemma 2.2]{dps.coord} and \cite[Corollary 5.4.2]{garling}):

\begin{lemma}
\label{coty} Let $X$ be a cotype $q$ Banach space, $1\leq r\leq q$ and $%
\left( x_{\mathbf{i}}\right)_{\mathbf{i} \in \mathcal{M}(m,n)}$ be a matrix
in $X$. Then 
\begin{equation*}
\left( \sum_{\mathbf{i}\in \mathcal{M}(m,n)}\left\Vert x_{\mathbf{i}%
}\right\Vert _{X}^{q}\right) ^{\frac{1}{q}}\leq C_{q}(X)^{m}\mathrm{K}%
_{r,2}^{m}\left( \int_{I^{m}}\left\Vert \sum_{\mathbf{i}\in \mathcal{M}%
(m,n)}r_{\mathbf{i}}(t)x_{\mathbf{i}}\right\Vert ^{r}dt\right) ^{\frac{1}{r}}
\end{equation*}%
where, for each $\mathbf{i}=(i_{1},\dots ,i_{m})\in \mathcal{M}(m,n)$, $r_{%
\mathbf{i}}(t)=r_{i_{1}}(t_{1})\cdots r_{i_{m}}(t_{m})$ and $%
dt=dt_{1}...dt_{m}$.
\end{lemma}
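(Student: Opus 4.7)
The plan is to proceed by induction on $m$, alternating applications of the cotype inequality (\ref{qqq}), the Kahane inequality, and Minkowski's integral inequality. Set $C := C_q(X)\mathrm{K}_{r,2}$ throughout; the goal is to establish the bound with constant $C^m$.

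For the base case $m=1$, apply the cotype $q$ inequality directly to the vectors $(x_i)_{i=1}^n$ in $X$ to get an $L^2$-bound, then invoke the Kahane inequality with parameters $(p,q)=(r,2)$ to replace the $L^2$-norm by an $L^r$-norm. This produces $(\sum_i \|x_i\|^q)^{1/q} \leq C (\int_I \|\sum_i r_i(t)x_i\|^r dt)^{1/r}$, which is the $m=1$ assertion.

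For the inductive step, assume the statement holds for $m-1$ and let $(x_{\mathbf{i}})_{\mathbf{i}\in\mathcal{M}(m,n)}$ be given. First, fix $\mathbf{i}'=(i_1,\dots,i_{m-1})$ and apply the $m=1$ case to the family $(x_{\mathbf{i}',i_m})_{i_m=1}^n$ in $X$. Writing $v_{\mathbf{i}'}(t_m):=\sum_{i_m} r_{i_m}(t_m)x_{\mathbf{i}',i_m}$, this yields $\sum_{i_m}\|x_{\mathbf{i}}\|^q \leq C^q \|v_{\mathbf{i}'}\|_{L^r(I;X)}^q$. Summing over $\mathbf{i}'$ gives
\[
\Bigl(\sum_{\mathbf{i}\in\mathcal{M}(m,n)}\|x_{\mathbf{i}}\|^q\Bigr)^{1/q} \leq C\,\Bigl(\sum_{\mathbf{i}'\in\mathcal{M}(m-1,n)}\|v_{\mathbf{i}'}\|_{L^r(I;X)}^q\Bigr)^{1/q}.
\]
Since $r\leq q$, Minkowski's integral inequality (with exponent $q/r\geq 1$) permits swapping the outer sum and the integral:
\[
\Bigl(\sum_{\mathbf{i}'}\|v_{\mathbf{i}'}\|_{L^r(I;X)}^q\Bigr)^{1/q} \leq \Bigl(\int_I \Bigl(\sum_{\mathbf{i}'}\|v_{\mathbf{i}'}(t_m)\|^q\Bigr)^{r/q}dt_m\Bigr)^{1/r}.
\]

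Now, for each fixed $t_m$, the matrix $(v_{\mathbf{i}'}(t_m))_{\mathbf{i}'}$ lies in $X$, so the inductive hypothesis applied in dimension $m-1$ yields $(\sum_{\mathbf{i}'}\|v_{\mathbf{i}'}(t_m)\|^q)^{1/q} \leq C^{m-1}(\int_{I^{m-1}}\|\sum_{\mathbf{i}'}r_{\mathbf{i}'}(t')v_{\mathbf{i}'}(t_m)\|^r dt')^{1/r}$. Raising to the $r$-th power, integrating over $t_m$, and using the identity
\[
\sum_{\mathbf{i}'}r_{\mathbf{i}'}(t')v_{\mathbf{i}'}(t_m) = \sum_{\mathbf{i}'}r_{\mathbf{i}'}(t')\sum_{i_m}r_{i_m}(t_m)x_{\mathbf{i}',i_m} = \sum_{\mathbf{i}\in\mathcal{M}(m,n)}r_{\mathbf{i}}(t)x_{\mathbf{i}},
\]
with $t=(t',t_m)\in I^m$, the nested integrals collapse (Fubini) into a single integral on $I^m$, delivering the factor $C^{m-1}(\int_{I^m}\|\sum_{\mathbf{i}}r_{\mathbf{i}}(t)x_{\mathbf{i}}\|^r dt)^{1/r}$. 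Chaining the three bounds yields $C\cdot C^{m-1}=C^m$, completing the induction.

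The main obstacle is bookkeeping with the exponents $r$ and $q$: one must ensure the Minkowski swap is applied in the correct direction ($q\geq r$ is essential) and that the $L^r$-norm produced by each use of Kahane is compatible with the inductive hypothesis, so that no stray factors of $\mathrm{K}_{r,2}$ or $C_q(X)$ accumulate beyond $C^m$. Once this accounting is done cleanly, the factorization of the Rademacher system $r_{\mathbf{i}}(t)=r_{i_1}(t_1)\cdots r_{i_m}(t_m)$ makes the telescoping work automatically.
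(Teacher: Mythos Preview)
The paper does not supply its own proof of this lemma; it simply cites \cite[Lemma 2.2]{dps.coord} as a known result. Your argument is correct and is precisely the standard one: induct on $m$, peeling off one coordinate at a time via cotype plus Kahane, then use Minkowski's integral inequality (valid since $q/r\geq 1$) to push the $\ell^q$-sum inside the $t_m$-integral before invoking the inductive hypothesis at level $m-1$. The identification $\sum_{\mathbf{i}'} r_{\mathbf{i}'}(t')\,v_{\mathbf{i}'}(t_m)=\sum_{\mathbf{i}} r_{\mathbf{i}}(t)\,x_{\mathbf{i}}$ and Fubini then recombine the integrals, and the constants telescope to $\bigl(C_q(X)\mathrm{K}_{r,2}\bigr)^m$ exactly as claimed.
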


\bigskip

\begin{lemma}
\label{mink.seq} For $0<p<q<+\infty$, and any sequence of scalars $%
\left(a_{ij}\right) _{i,j\in\mathbb{N}}$ we have 
\begin{equation*}
\left( \sum_{i}\left( \sum_{j}|a_{ij}|^{p}\right) ^{\frac{q}{p}}\right) ^{%
\frac{1}{q}}\leq\left( \sum_{j}\left( \sum_{i}|a_{ij}|^{q}\right) ^{\frac{p}{%
q}}\right) ^{\frac{1}{p}}.
\end{equation*}
\end{lemma}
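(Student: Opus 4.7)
The plan is to reduce the claim to the classical Minkowski inequality in $\ell^{r}$ with exponent $r := q/p$. Since $0 < p < q < \infty$, we have $r > 1$, which is exactly the regime where Minkowski's inequality is available.

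First I would perform the substitution $b_{ij} := |a_{ij}|^{p} \geq 0$, so that $|a_{ij}|^{q} = b_{ij}^{r}$, and raise both sides of the proposed inequality to the $p$-th power. The left-hand side becomes
\[
\left( \sum_{i} \Bigl( \sum_{j} b_{ij} \Bigr)^{r} \right)^{1/r},
\]
that is, the $\ell^{r}$-norm in the index $i$ of the vector $\sum_{j} b_{\cdot j}$; the right-hand side becomes
\[
\sum_{j} \left( \sum_{i} b_{ij}^{r} \right)^{1/r},
\]
i.e., the sum over $j$ of the $\ell^{r}$-norms of the sequences $(b_{ij})_{i}$.

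Thus the desired inequality reads exactly as the triangle inequality in $\ell^{r}$ applied to the countable family $\{(b_{ij})_{i}\}_{j}$, which holds since $r \geq 1$. I would first establish this for finite truncations (finitely many indices $i$ and $j$), where it is the classical finite-dimensional Minkowski inequality, and then pass to the limit via monotone convergence, exploiting the nonnegativity of the $b_{ij}$.

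The only step deserving any attention is the limiting argument for infinite sums; the core inequality is Minkowski. I do not expect a real obstacle: once the substitution $b_{ij} = |a_{ij}|^{p}$ is performed, the statement is a direct transcription of Minkowski's inequality in its discrete form, and the result is classical (this is the reason it is quoted as \cite[Corollary 5.4.2]{garling} in the paper).
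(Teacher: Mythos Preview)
Your reduction is correct: setting $b_{ij}=|a_{ij}|^{p}$ and $r=q/p>1$ turns the inequality into the $\ell^{r}$ triangle inequality for the sequences $(b_{ij})_{i}$ summed over $j$, and the passage from finite truncations to the full sums by monotone convergence is routine. The paper does not supply its own proof of this lemma but simply cites \cite[Corollary 5.4.2]{garling}, so your argument is exactly the standard one underlying that reference.
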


\bigskip

\section{The interpolative approach}

We now recall the interpolative approach introduced in \cite{abps} that was
crucial (see \cite{bps}) to obtain the ultimate constants of the
Bohennblust-Hille inequalities and to provide the precise asymptotic growth
of the Bohr radius.

For $\mathbf{p}=(p_{1},\dots,p_{m})\in\lbrack1,+\infty)^{m}$, and a Banach
space $X$, we shall consider the space 
\begin{equation*}
\ell_{\mathbf{p}}(X) := \ell_{p_{1}}\left( \ell_{p_{2}}\left( \dots\left(
\ell_{p_{m}}(X)\right) \dots\right) \right),
\end{equation*}
namely, a vector matrix $\left( x_{\mathbf{i}}\right) _{\mathbf{i}\in%
\mathcal{M}(m,\mathbb{N})}\in\ell_{\mathbf{p}}(X)$ if, and only if, 
\begin{equation*}
\left( \sum_{i_{1}=1}^{\infty}\left( \sum_{i_{2}=1}^{\infty}\left(
\dots\left( \sum_{i_{m-1}=1}^{\infty}\left( \sum_{i_{m}=1}^{\infty
}\left\Vert x_{\mathbf{i}}\right\Vert_X^{p_{m}}\right) ^{\frac{p_{m-1}}{p_{m}%
}}\right) ^{\frac{p_{m-2}}{p_{m-1}}}\dots\right) ^{\frac{p_{2}}{p_{3}}%
}\right) ^{\frac{p_{1}}{p_{2}}}\right) ^{\frac{1}{p_{1}}}<+\infty.
\end{equation*}
When $X=\mathbb{K}$, we just write $\ell_{\mathbf{p}}$ instead of $\ell_{%
\mathbf{p}}(\mathbb{K})$. The core of the interpolative approach from \cite%
{abps} is summarized as follows (we sketch the proof for the sake of
completeness):

\begin{lemma}[Interpolation procedure]
\label{gen.interp} Let $m,n,N$ be positive integers and $\mathbf{q},\mathbf{q%
}(1),\ldots,\mathbf{q}(N)\in[1,+\infty)^m$ be such that $\left( \frac{1}{%
q_{1}},\dots,\frac{1}{q_{m}}\right)$ belongs to the convex hull of $\left( 
\frac{1}{q_{1}(k)}, \dots, \frac{1}{q_{m}(k)}\right) ,\,k=1,\dots,N$. Then
for all scalar matrix $\mathbf{a} = \left(a_{\mathbf{i}}\right)_{\mathbf{i}%
\in\mathcal{M}(m,n)}$, 
\begin{equation*}
\left\Vert \mathbf{a}\right\Vert _{\mathbf{q}}\leq\prod_{k=1}^{N}\left\Vert 
\mathbf{a}\right\Vert _{\mathbf{q}(k)}^{\theta_{k}},
\end{equation*}
\emph{i.e.}, 
\begin{equation*}
\left( \sum_{i_1=1}^n \left( \dots \left( \sum_{i_m=1}^n \vert a_\mathbf{i}
\vert^{q_m} \right)^\frac{q_{m-1}}{q_m} \dots \right)^\frac{q_1}{q_2}
\right)^\frac1{q_1} \leq \prod_{k=1}^N \left[ \left( \sum_{i_1=1}^n \left(
\dots \left( \sum_{i_m=1}^n \vert a_\mathbf{i} \vert^{q_m(k)} \right)^\frac{%
q_{m-1}(k)}{q_m(k)} \dots \right)^\frac{q_1(k)}{q_2(k)}\right)^%
\frac1{q_1(k)} \right]^{\theta_k},
\end{equation*}
where $\theta_{k}$ are the coordinates of $\left(\frac{1}{q_{1}},\dots,\frac{%
1}{q_{m}}\right)$ on the convex hull.
\end{lemma}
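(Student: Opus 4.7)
The plan is to proceed by induction on the nested layers of the mixed norm, applying the generalized H\"older inequality at each layer. Recall that for nonnegative weights $\lambda_1,\dots,\lambda_N$ with $\sum_k\lambda_k=1$ and nonnegative arrays $(f_{k,i})$,
\[
\sum_i \prod_{k=1}^N f_{k,i}^{\lambda_k} \le \prod_{k=1}^N \Big(\sum_i f_{k,i}\Big)^{\lambda_k}.
\]
The convex-hull hypothesis supplies $\theta_k\ge 0$ with $\sum_k\theta_k=1$ and $1/q_j=\sum_k \theta_k/q_j(k)$ \emph{simultaneously for every} $j=1,\dots,m$; this identity at each coordinate $j$ is exactly what will force the H\"older weights to sum to $1$ at each step of the induction.

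For the innermost sum, fix $(i_1,\dots,i_{m-1})$ and set $\lambda_k:=\theta_k q_m/q_m(k)$, so that $\sum_k\lambda_k=1$ by the hypothesis at $j=m$. Writing $|a_{\mathbf{i}}|^{q_m}=\prod_k (|a_{\mathbf{i}}|^{q_m(k)})^{\lambda_k}$ and applying generalized H\"older in the variable $i_m$ yields
\[
\Big(\sum_{i_m} |a_{\mathbf{i}}|^{q_m}\Big)^{1/q_m} \le \prod_{k=1}^N \Big(\sum_{i_m} |a_{\mathbf{i}}|^{q_m(k)}\Big)^{\theta_k/q_m(k)}.
\]
For the inductive step, let $A_{j+1}^{1/q_{j+1}}$ and $B_{j+1}^{(k)}$ denote the $m-j$ innermost nested sums built with the exponents $\mathbf{q}$ and $\mathbf{q}(k)$ respectively, and assume $A_{j+1}^{1/q_{j+1}} \le \prod_k (B_{j+1}^{(k)})^{\theta_k/q_{j+1}(k)}$. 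Raising to the power $q_j$, summing over $i_j$, and regrouping exponents via
\[
\big(B_{j+1}^{(k)}\big)^{\theta_k q_j/q_{j+1}(k)} = \Big[\big(B_{j+1}^{(k)}\big)^{q_j(k)/q_{j+1}(k)}\Big]^{\theta_k q_j/q_j(k)}
\]
casts the right-hand side as $\sum_{i_j}\prod_k f_{k,i_j}^{\lambda_k}$ with $\lambda_k=\theta_k q_j/q_j(k)$. The hypothesis at index $j$ again gives $\sum_k \lambda_k = 1$, and H\"older then produces the analogous inequality one layer up. Iterating down to $j=1$ gives $A_1^{1/q_1}\le \prod_k (B_1^{(k)})^{\theta_k/q_1(k)}$, which is the desired bound.

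The only real obstacle is the bookkeeping: one must observe that the convex-combination hypothesis yields the normalization $\sum_k \theta_k q_j/q_j(k)=1$ at \emph{every} layer $j$, and that the algebraic identity $\theta_k q_j/q_{j+1}(k) = (\theta_k q_j/q_j(k))\cdot(q_j(k)/q_{j+1}(k))$ telescopes cleanly so that each H\"older output at level $j$ reassembles into $B_j^{(k)}$ at the next step. Once this alignment is recognized, the proof is a routine induction with no analytic input beyond generalized H\"older.
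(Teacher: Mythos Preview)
Your proof is correct, but it takes a genuinely different route from the paper's argument. The paper proceeds by induction on $N$, the number of extreme points, and invokes the complex-interpolation identity $\ell_{\mathbf{r}}(X)=[\ell_{\mathbf{p}}(X),\ell_{\mathbf{q}}(X)]_{\theta}$ for mixed-norm sequence spaces as a black box; at each step one peels off one vertex $\mathbf{q}(1)$ and re-expresses the remaining barycenter as a convex combination of the other $N-1$ vertices. You instead induct on $m$, the depth of the nesting, and use nothing beyond the generalized H\"older inequality at each layer, with the key observation that the convex-hull hypothesis forces the H\"older weights $\lambda_k=\theta_k q_j/q_j(k)$ to sum to $1$ at every level $j$. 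Your approach is more elementary and fully self-contained (no appeal to interpolation theory), and it makes transparent exactly where each coordinate identity $1/q_j=\sum_k\theta_k/q_j(k)$ is consumed. The paper's approach, on the other hand, packages the layer-by-layer H\"older argument into a single interpolation citation and would extend more readily to the Banach-space-valued setting $\ell_{\mathbf{p}}(X)$ that the paper alludes to.
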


\begin{proof}[Sketch of the proof]
We just follows the lines of \cite[Proposition 2.1]{abps}. Proceeding by
induction on $N$ and using that, for any Banach space $X$ and $%
\theta\in\lbrack0,1]$, 
\begin{equation*}
\ell_{\mathbf{r}}(X)=\left[ \ell_{\mathbf{p}}(X),\ell_{\mathbf{q}}(X)\right]
_{\theta},
\end{equation*}
with $\frac{1}{r_{i}}=\frac{\theta}{p_{i}}+\frac{1-\theta}{q_{i}}$, for $%
i=1,\dots,m$. If 
\begin{equation*}
\frac{1}{q_{i}}=\frac{\theta_{1}}{q_{i}(1)}+\cdots+\frac{\theta_{N}}{q_{i}(N)%
},
\end{equation*}
with $\sum_{k=1}^{N}\theta_{k}=1$ and each $\theta_{k}\in\lbrack0,1]$, then
we also have 
\begin{equation*}
\frac{1}{q_{i}}=\frac{\theta_{1}}{q_{i}(1)}+\frac{1-\theta_{1}}{p_{i}},
\end{equation*}
setting 
\begin{equation*}
\frac{1}{p_{i}}=\frac{\alpha_{2}}{q_{i}(2)}+\cdots+\frac{\alpha_{N}}{q_{i}(N)%
},\ \ \ \mbox{and}\ \alpha_{j}=\frac{\theta_{j}}{1-\theta_{1}},
\end{equation*}
for $i=1,\dots,m$ and $j=2,\dots,N$. So $\alpha_{j}\in\lbrack0,1]$ and $%
\sum_{j=2}^{N}\alpha_{j}=1$. Therefore, by the induction hypothesis, we
conclude that 
\begin{equation*}
\left\Vert \mathbf{a}\right\Vert _{\mathbf{q}}\leq\left\Vert \mathbf{a}%
\right\Vert _{\mathbf{q}(1)}^{\theta_{1}}\cdot\left\Vert \mathbf{a}%
\right\Vert _{\mathbf{p}}^{1-\theta_{1}} \leq\left\Vert \mathbf{a}%
\right\Vert_{\mathbf{q}(1)}^{\theta_{1}} \cdot \left[ \prod_{j=2}^{N}\left%
\Vert \mathbf{a}\right\Vert _{\mathbf{q}(j)}^{\alpha_{j}}\right]
^{1-\theta_{1}} = \prod_{k=1}^{N}\left\Vert \mathbf{a}\right\Vert _{\mathbf{q%
}(k)}^{\theta_{k}}.
\end{equation*}
\end{proof}

Consequently, combining the previous result with Lemma \ref{mink.seq} the
following generalization of the Blei inequality arises (see \cite[Remark 2.2]%
{bps}):

\begin{lemma}[Bayart, Pellegrino, Seoane-Sepulveda]
\label{blei.interp} Let $m,n$ be positive integers, $1\leq k\leq m$ and $%
1\leq s\leq q$. Then for all scalar matrix $\left( a_{\mathbf{i}}\right) _{%
\mathbf{i}\in \mathcal{M}(m,n)}$, 
\begin{equation*}
\left( \sum_{\mathbf{i}\in \mathcal{M}(m,n)}\left\vert a_{\mathbf{i}%
}\right\vert^{\rho}\right)^{\frac{1}{\rho }} \leq \prod_{S\in \mathcal{P}%
_{k}\left( m\right)} \left( \sum_{\mathbf{i}_{S}} \left( \sum_{\mathbf{i}_{%
\widehat{S}}} \left\vert a_{\mathbf{i}}\right\vert^{q} \right)^{\frac{s}{q}}
\right)^{\frac{1}{s} \cdot \frac{1}{\binom{m}{k}}},
\end{equation*}
where 
\begin{equation*}
\rho :=\frac{msq}{kq+(m-k)s}.
\end{equation*}
\end{lemma}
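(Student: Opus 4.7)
My plan is to apply Lemma \ref{gen.interp} to a carefully chosen family of exponent vectors indexed by $\mathcal{P}_k(m)$, and then clean up each interpolation factor by a Minkowski-type rearrangement via Lemma \ref{mink.seq}.

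\textbf{Setting up the interpolation.} For each $S\in\mathcal{P}_k(m)$ I define an exponent vector $\mathbf{q}(S)\in[1,+\infty)^m$ by
\[
q_j(S)=\begin{cases} s, & j\in S,\\ q, & j\in\widehat{S},\end{cases}
\]
and I assign the uniform weight $\theta_S=1/\binom{m}{k}$. For a fixed coordinate $j$ there are $\binom{m-1}{k-1}$ subsets $S\in\mathcal{P}_k(m)$ containing $j$ and $\binom{m-1}{k}$ subsets missing it, so
\[
\sum_{S\in\mathcal{P}_k(m)}\frac{\theta_S}{q_j(S)}=\frac{1}{\binom{m}{k}}\left(\binom{m-1}{k-1}\cdot\frac{1}{s}+\binom{m-1}{k}\cdot\frac{1}{q}\right)=\frac{k}{ms}+\frac{m-k}{mq}=\frac{1}{\rho}.
\]
Hence the constant vector $(\rho,\dots,\rho)$ lies in the convex hull of the $\mathbf{q}(S)$ with these weights, and Lemma \ref{gen.interp} immediately yields
\[
\left(\sum_{\mathbf{i}\in\mathcal{M}(m,n)}|a_{\mathbf{i}}|^\rho\right)^{1/\rho}\leq\prod_{S\in\mathcal{P}_k(m)}\|\mathbf{a}\|_{\mathbf{q}(S)}^{1/\binom{m}{k}}.
\]

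\textbf{Reshaping each factor.} The norm $\|\mathbf{a}\|_{\mathbf{q}(S)}$ is an iterated $\ell_s$--$\ell_q$ norm whose exponents appear in the positions dictated by the natural order $1,\dots,m$, interleaved according to which coordinates lie in $S$. Because $s\leq q$, Lemma \ref{mink.seq} lets me swap every adjacent pair in which an outer $s$-sum sits above an inner $q$-sum. Iterating this operation, I push all $q$-sums (over the indices in $\widehat{S}$) to the inside and all $s$-sums (over the indices in $S$) to the outside, arriving at
\[
\|\mathbf{a}\|_{\mathbf{q}(S)}\leq\left(\sum_{\mathbf{i}_S}\left(\sum_{\mathbf{i}_{\widehat{S}}}|a_{\mathbf{i}}|^q\right)^{s/q}\right)^{1/s}.
\]
Substituting into the interpolation bound produces the stated inequality.

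\textbf{Main obstacle.} The real work is identifying the right family $\{\mathbf{q}(S)\}_{S\in\mathcal{P}_k(m)}$ and the uniform weights $\theta_S$; once the counting identity $\binom{m-1}{k-1}/s+\binom{m-1}{k}/q=\binom{m}{k}/\rho$ falls out, the rest is essentially interpolation plus Minkowski bookkeeping. The one place where care is needed is the permutation step for $\|\mathbf{a}\|_{\mathbf{q}(S)}$: one has to verify that each adjacent swap is a legitimate instance of Lemma \ref{mink.seq} (guaranteed by $s\leq q$) and that finitely many such swaps suffice to bring the nested sum into the canonical form with the $s$-norm entirely outside the $q$-norm.
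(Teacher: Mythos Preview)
Your proof is correct and follows exactly the route the paper indicates (``combining the previous result with Lemma~\ref{mink.seq}''): set up the family $\{\mathbf{q}(S)\}_{S\in\mathcal{P}_k(m)}$ with uniform weights $1/\binom{m}{k}$, check the convex-combination identity, apply Lemma~\ref{gen.interp}, and then rearrange each factor via Lemma~\ref{mink.seq}. One small slip in wording: the adjacent pairs you swap are those with an \emph{outer $q$-sum above an inner $s$-sum} (the ``smaller inside'' configuration, which Lemma~\ref{mink.seq} bounds by the ``larger inside'' one), not the reverse---but your final inequality and the direction of the argument are correct.
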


\section{Multiple summing operators with multiple exponents}

In this section we apply the interpolation procedure to generalize results
of the theory of multiple summing multilinear operators. Our main result
recovers, with a new approach, one of the main results of \cite{dps.coord}.

\vskip.1cm

For Banach spaces $X_1,\dots,X_m$ and a proper non-void subset $D \subset
\{1,\dots,m\}$ let $X^D$ be the product $\prod_{k \in D} X_k$. A vector $x_D
\in X^D$ may be seen as an element $\widetilde{x_D} = \left(\widetilde{x_D^1}%
,\dots,\widetilde{x_D^m}\right) \in X_1 \times\dots\times X_m$, with $%
\widetilde{x^i_D}=x^i_D$, if $i\in D$, and $\widetilde{x^i_D}=0$, otherwise.
Given $U \in \mathcal{L} \left(X_1,\dots,X_m;Y\right)$, we define the map 
\begin{equation*}
\begin{array}{ccccccc}
U^D: & X^{\widehat{D}} & \rightarrow & \mathcal{L}\left(X^D;Y\right) &  &  & 
\\ 
& x_{\widehat{D}} & \mapsto & U^D_{x_{\widehat{D}}}: & X^D & \rightarrow & Y
\\ 
&  &  &  & y_D & \mapsto & U(\widetilde{x_{\widehat{D}}}+\widetilde{y_D}).%
\end{array}%
\end{equation*}
Clearly $U^D$ is well-defined and $\vert\widehat{D}\vert$-linear. Notice
that, for each $x_{\widehat{D}} \in X^{\widehat{D}},\, U^D_{x_{\widehat{D}}}$
is the restriction of $U$ to the $D$-coordinates, with the $\widehat{D}$%
-coordinates fixed through $x_{\widehat{D}}$. The following definition was
introduced in \cite{dps.coord}.

\begin{definition}
\label{one} Let $1\leq r<\infty$ and let $D $ be a proper subset of $%
\{1,\dots,m\}$. We say that $U \in \mathcal{L} \left(X_1,\dots,X_m;Y\right)$
is \emph{multiple $(r,1)$-summing in the coordinates of $D$} (or \emph{%
multiple $(r,1)$-summing in $D$}) whenever $U^D$ has its range in $%
\Pi_{(r,1)}^{\left\vert D\right\vert}\left(X^D;Y\right)$. Moreover, $U$ is 
\emph{separately $(r,1)$-summing} if $U$ is multiple $(r,1)$-summing in all
one point subset of $\{1,\dots,m\}$.
\end{definition}

\vskip.1cm

The following result came from a careful look at the argument in the proof
of \cite[Theorem 4.1]{dps.coord}. It provides estimates for bounded $m$%
-linear operators that are multiple $(r,1)$-summing in the coordinates of a
fixed index proper subset of $\{1,\ldots,m\}$.

\begin{theorem}[Defant, Popa, Schwarting]
\label{DPS} Let $Y$ be a cotype $q$ Banach space, $1\leq r\leq q$ and
suppose that $D \subseteq \{1,...,m\}$ is non-void and proper. If $U\in 
\mathcal{L}\left(X_1,\dots,X_m;Y\right)$ is multiple $(r,1)$-summing in the
coordinates of $D$, then 
\begin{equation*}
\left( \sum_{\mathbf{i}_{D}} \left( \sum_{\mathbf{i}_{\widehat{D}}}
\left\Vert U\left( x_{i_{1}}^{(1)},\dots ,x_{i_{m}}^{(m)}\right)
\right\Vert^{q} \right)^{\frac{r}{q}} \right)^{\frac{1}{r}} \leq \mathrm{A}%
_{q,r}^{|\widehat{D}|}(Y) \left\Vert U^{D}:X^{\widehat{D}} \rightarrow \Pi
_{(r,1)}^{|D|}\left( X^{D};Y\right) \right\Vert
\end{equation*}
for all finite choice of vectors $x_{1}^{(k)},\dots ,x_{N}^{(k)}\in X_{k}$,
with $\left\Vert \left( x_{j}^{(k)}\right) _{j=1}^{N}\right\Vert _{w,1}\leq
1 $, for $k=1,\dots ,m$ and $\mathrm{A}_{q,r}(Y):=C_{q}(Y)\mathrm{K}_{r,2}$.
\end{theorem}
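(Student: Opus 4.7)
The plan is to reduce the mixed $(q,r)$-norm on the left-hand side to an $L_r$-integral of a Rademacher sum via the cotype inequality (\lemref{coty}), and then exploit the hypothesis that $U^D$ takes values in $\Pi_{(r,1)}^{|D|}(X^D;Y)$ \emph{pointwise} in the Rademacher parameter. Throughout, fix vectors $x_j^{(k)}\in X_k$, $j=1,\dots,N$, with $\Vert(x_j^{(k)})_{j=1}^N\Vert_{w,1}\leq 1$ for each $k=1,\dots,m$.

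First, for each fixed $\mathbf{i}_D\in\mathcal{M}(D,N)$, I would regard $\bigl(U(x_{i_1}^{(1)},\dots,x_{i_m}^{(m)})\bigr)_{\mathbf{i}_{\widehat{D}}}$ as a $|\widehat{D}|$-indexed matrix in $Y$ and apply \lemref{coty} with exponents $q$ and $r$. This bounds its $\ell_q$-norm by $\mathrm{A}_{q,r}(Y)^{|\widehat{D}|}$ times the $L_r$-norm of
\[
F_{\mathbf{i}_D}(t) := \sum_{\mathbf{i}_{\widehat{D}}} r_{\mathbf{i}_{\widehat{D}}}(t)\,U\bigl(x_{i_1}^{(1)},\dots,x_{i_m}^{(m)}\bigr),\qquad t\in I^{|\widehat{D}|}.
\]
Multilinearity of $U$ yields the crucial algebraic rewriting
\[
F_{\mathbf{i}_D}(t) \;=\; U^D_{(y^{(k)}(t_k))_{k\in\widehat{D}}}\bigl((x_{i_k}^{(k)})_{k\in D}\bigr),\qquad y^{(k)}(t_k):=\sum_{i=1}^N r_i(t_k)\,x_i^{(k)},
\]
which exhibits $F_{\mathbf{i}_D}(t)$ as a single evaluation of $U^D_{y(t)}$ on the $D$-coordinates, i.e.\ in a form to which the summing hypothesis directly applies.

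Next, raising the cotype bound to the $r$-th power, summing over $\mathbf{i}_D$, and exchanging sum and integral by Fubini reduces matters to controlling $\int_{I^{|\widehat{D}|}}\sum_{\mathbf{i}_D}\bigl\Vert U^D_{y(t)}((x_{i_k}^{(k)})_{k\in D})\bigr\Vert^r\,dt$. For each fixed $t$, the hypothesis supplies $\pi_{(r,1)}^{|D|}\bigl(U^D_{y(t)}\bigr)\leq \Vert U^D\Vert\prod_{k\in\widehat{D}}\Vert y^{(k)}(t_k)\Vert$, so the inner sum is at most $\Vert U^D\Vert^r\prod_{k\in\widehat{D}}\Vert y^{(k)}(t_k)\Vert^r$ (the $\prod_{k\in D}\Vert(x_j^{(k)})\Vert_{w,1}^r$ factor being $\leq 1$). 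Independence of the Rademacher sequences, together with the pointwise estimate $\bigl\Vert\sum_i r_i(t)x_i^{(k)}\bigr\Vert\leq\sup_{\epsilon\in\{\pm 1\}^N}\bigl\Vert\sum_i\epsilon_i x_i^{(k)}\bigr\Vert\leq \Vert(x_j^{(k)})\Vert_{w,1}\leq 1$, then yields $\int_{I^{|\widehat{D}|}}\prod_{k\in\widehat{D}}\Vert y^{(k)}(t_k)\Vert^r\,dt\leq 1$. Taking $r$-th roots produces the stated inequality.

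The only genuinely delicate point is the algebraic identification in the second paragraph: recognizing that the Rademacher sum produced by cotype, after multilinear reshuffling, assembles into a single evaluation $U^D_{y(t)}((x_{i_k}^{(k)})_{k\in D})$, which is exactly the object the $(r,1)$-summing hypothesis acts on. Everything else — Fubini, homogeneity of the multilinear-operator norm, and the standard domination of Rademacher sums by the weak-$\ell_1$ norm — is routine manipulation.
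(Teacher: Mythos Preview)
Your proof is correct. Note, however, that the paper does not actually supply its own proof of this theorem: it is stated as a result extracted from the argument in \cite[Theorem 4.1]{dps.coord} and attributed to Defant, Popa and Schwarting. What you have written is precisely the argument one extracts from that source --- apply \lemref{coty} in the $\widehat{D}$-coordinates, use multilinearity to collapse the Rademacher sum into a single evaluation $U^D_{y(t)}$, invoke the $(r,1)$-summing hypothesis pointwise in $t$, and finish with the standard bound $\bigl\Vert\sum_i r_i(t)x_i^{(k)}\bigr\Vert\leq\Vert(x_j^{(k)})\Vert_{w,1}$. There is nothing to compare against, and no gap in what you wrote.
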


Above and from now on, the symbol $\sum_{\mathbf{i}_{D}}$ means that we are
taking the sum over the indices $i_k$, with $k\in D$. Also the constant $%
\mathrm{A}_{q,r}(Y)$ is defined as above. The main result of this section
reads as follows:

\begin{theorem}
\label{4.2} Let $Y$ be a cotype $q$ Banach space, $1\leq r_{1},\dots
,r_{n}\leq q $ and $\{1,...,m\}$ be the disjoint union of non-void proper
subsets $C_{1},\dots ,C_{n}$. If $U\in \mathcal{L}\left( X_{1},\dots
,X_{m};Y\right) $ is multiple $(r_{k},1)$-summing in each coordinate subset $%
C_k$, for $k=1,\dots,n$, then 
\begin{align*}
& \left( \sum_{\mathbf{i}_{C_{1}}} \left( \sum_{\mathbf{i}_{C_{2}}} \left(
\dots \left( \sum_{\mathbf{i}_{C_{n}}} \left\Vert U
\left(x_{i_{1}}^{(1)},\dots,x_{i_{m}}^{(m)}\right) \right\Vert_Y^{q_n}
\right)^{\frac{q_{n-1}}{q_{n}}} \dots \right)^{\frac{q_{2}}{q_{3}}} \right)^{%
\frac{q_{1}}{q_{2}}} \right)^{\frac{1}{q_{1}}} \\
& \leq \prod_{k=1}^n \left[ \mathrm{A}_{q,r_k}^{|\widehat{C_k}|}(Y)
\left\Vert U^{C_k} : X^{\widehat{C_k}} \to \Pi_{r_k,1}^{|C_k|}
\left(X^{C_k};Y\right) \right\Vert \right]^{\theta_k},
\end{align*}
regardless of the finite choice of vectors $x_{1}^{(l)},\dots
,x_{N}^{(l)}\in X_{l}$ with $\left\Vert \left( x_{j}^{(l)}\right)
_{j=1}^{N}\right\Vert _{w,1}\leq 1$, $l=1,\dots ,m$. Here, each $q_{k}\in
\lbrack r_{k},q]$ is such that $\frac{1}{q_{k}}=\frac{\theta _{k}}{r_{k}}+%
\frac{\left( 1-\theta _{k}\right) }{q}$, with $\theta _{1},\dots ,\theta
_{n}\in \lbrack 0,1]$ and $\sum\limits_{k=1}^{n}\theta _{k}=1$.
\end{theorem}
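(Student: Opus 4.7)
The plan is to reduce Theorem \ref{4.2} to the two tools developed in Section 2: Theorem \ref{DPS} (the Defant--Popa--Schwarting estimate for a single coordinate subset) and Lemma \ref{gen.interp} (the interpolation procedure), with Lemma \ref{mink.seq} (Minkowski) used to reconcile orders of summation. I fix a finite family $\left(x_{j}^{(l)}\right)_{j=1}^{N}\subset X_{l}$ with $\|(x_{j}^{(l)})_{j=1}^{N}\|_{w,1}\le 1$ and write $a_{\mathbf{i}}:=\|U(x_{i_{1}}^{(1)},\dots,x_{i_{m}}^{(m)})\|_{Y}$; everything below is about the scalar matrix $(a_{\mathbf{i}})_{\mathbf{i}\in\mathcal{M}(m,N)}$.

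First, for every $k=1,\dots,n$, apply Theorem~\ref{DPS} with $D=C_{k}$ and summing exponent $r_{k}$. Since $U$ is multiple $(r_{k},1)$-summing in $C_{k}$, this yields
\begin{equation*}
\left(\sum_{\mathbf{i}_{C_{k}}}\Bigl(\sum_{\mathbf{i}_{\widehat{C_{k}}}}a_{\mathbf{i}}^{q}\Bigr)^{r_{k}/q}\right)^{1/r_{k}}\le M_{k}:=\mathrm{A}_{q,r_{k}}^{|\widehat{C_{k}}|}(Y)\,\bigl\|U^{C_{k}}:X^{\widehat{C_{k}}}\to\Pi_{(r_{k},1)}^{|C_{k}|}(X^{C_{k}};Y)\bigr\|.
\end{equation*}
Because the inner sums all carry the same exponent $q$, I may split $\sum_{\mathbf{i}_{\widehat{C_{k}}}}$ into iterated sums over $\mathbf{i}_{C_{l}}$ for $l\ne k$ in any order I like; in particular, place them in the order $C_{1},C_{2},\dots,C_{n}$ (skipping $C_{k}$).

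Next, I transport the outer summation in $C_{k}$ past the groups $C_{1},\dots,C_{k-1}$ that should sit outside of it in the target mixed norm. Since $r_{k}\le q$, Lemma~\ref{mink.seq} applied $k-1$ times (for $l=k-1,k-2,\dots,1$, swapping an outer $\ell_{r_{k}}^{\mathbf{i}_{C_{k}}}$ with an inner $\ell_{q}^{\mathbf{i}_{C_{l}}}$) gives
\begin{equation*}
M_{k}\ \ge\ \left(\sum_{\mathbf{i}_{C_{1}}}\Bigl(\sum_{\mathbf{i}_{C_{2}}}\Bigl(\cdots\Bigl(\sum_{\mathbf{i}_{C_{n}}}a_{\mathbf{i}}^{q_{n}(k)}\Bigr)^{q_{n-1}(k)/q_{n}(k)}\cdots\Bigr)^{q_{1}(k)/q_{2}(k)}\Bigr)\right)^{1/q_{1}(k)}=\|\mathbf{a}\|_{\mathbf{q}(k)},
\end{equation*}
where $\mathbf{q}(k)$ is the exponent tuple assigning $r_{k}$ to every position belonging to $C_{k}$ and $q$ to every other position, read in the natural order $C_{1},C_{2},\dots,C_{n}$.

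Finally, I verify that the target tuple $\mathbf{q}$ (which is $q_{l}$ on positions of $C_{l}$) lies in the convex hull of the $\mathbf{q}(k)$'s with weights $\theta_{k}$. Indeed, for any $j\in C_{l}$,
\begin{equation*}
\sum_{k=1}^{n}\frac{\theta_{k}}{q_{j}(k)}=\frac{\theta_{l}}{r_{l}}+\sum_{k\ne l}\frac{\theta_{k}}{q}=\frac{\theta_{l}}{r_{l}}+\frac{1-\theta_{l}}{q}=\frac{1}{q_{l}},
\end{equation*}
which is the hypothesis. Invoking Lemma~\ref{gen.interp} with these data,
\begin{equation*}
\|\mathbf{a}\|_{\mathbf{q}}\le\prod_{k=1}^{n}\|\mathbf{a}\|_{\mathbf{q}(k)}^{\theta_{k}}\le\prod_{k=1}^{n}M_{k}^{\theta_{k}},
\end{equation*}
which is precisely the inequality in the statement.

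The only nontrivial point is the rearrangement in the second step: one must check that each Minkowski swap is legal because every exponent between the outer $\ell_{r_{k}}$ and the groups it has to cross is $q\ge r_{k}$, and that after all swaps the resulting tuple is the one required by the interpolation. Once that bookkeeping is done, the interpolation lemma delivers the conclusion immediately; no cotype or summing hypothesis is used beyond the $n$ applications of Theorem~\ref{DPS} in step one.
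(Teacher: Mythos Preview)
Your proof is correct and follows essentially the same approach as the paper's own proof: apply Theorem~\ref{DPS} to each block $C_k$, use Lemma~\ref{mink.seq} to move the $\ell_{r_k}$-sum over $\mathbf{i}_{C_k}$ inside the $\ell_q$-sums over $\mathbf{i}_{C_1},\dots,\mathbf{i}_{C_{k-1}}$, check the convex-hull condition, and conclude via Lemma~\ref{gen.interp}. The only cosmetic difference is that the paper performs the Minkowski swap in a single application (treating $\mathbf{i}_{C_1},\dots,\mathbf{i}_{C_{k-1}}$ as one combined index, which is legitimate since they all carry exponent $q$), whereas you describe it as $k-1$ successive swaps; both are valid and yield the same tuple $\mathbf{q}(k)$.
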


\begin{proof}
Since $U$ is multiple $(r_{k},1)$-summing in each subset $C_{k}$, the
previous theorem assures that, for $x_{1}^{(l)},\dots ,x_{N}^{(l)}\in X_{l}$
with $\left\Vert \left( x_{j}^{(l)}\right) _{j=1}^{N}\right\Vert _{w,1}\leq
1 $, $l=1,\dots,m$, 
\begin{equation*}
\left( \sum_{\mathbf{i}_{C_{k}}}\left( \sum_{\mathbf{i}_{\widehat{C_{k}}%
}}\left\Vert U\left( x_{i_{1}}^{(1)},\dots,x_{i_{m}}^{(m)} \right)
\right\Vert_{Y}^{q}\right) ^{\frac{r_{k}}{q}}\right) ^{\frac{1}{r_{k}}}\leq 
\mathrm{A}_{q,r_{k}}^{\left\vert \widehat{C_{k}}\right\vert }(Y)\left\Vert
U^{C_{k}}:X^{\widehat{C_{k}}}\rightarrow \Pi _{(r_{k},1)}^{\left\vert
C_{k}\right\vert }\left( X^{C_{k}};Y\right) \right\Vert .
\end{equation*}%
for $k=1,\dots ,n$. Now, Lemma \ref{mink.seq} guarantees that we may change
the position of the exponents $r_{k}$ and $q$ (with the correspondent
indices): 
\begin{align*}
& \left( \sum_{\mathbf{i}_{C_{1}},\dots ,\mathbf{i}_{C_{k-1}}}\left( \sum_{%
\mathbf{i}_{C_{k}}}\left( \sum_{\mathbf{i}_{C_{k+1}},\dots ,\mathbf{i}%
_{C_{n}}}\left\Vert U\left( x_{i_{1}}^{(1)},\dots ,x_{i_{m}}^{(m)}\right)
\right\Vert _{Y}^{q}\right) ^{\frac{r_{k}}{q}}\right) ^{\frac{q}{r_{k}}%
}\right) ^{\frac{1}{q}} \\
&\leq \left( \sum_{\mathbf{i}_{C_{k}}}\left( \sum_{\mathbf{i}_{\widehat{C_{k}%
}}}\left\Vert U\left( x_{i_{1}}^{(1)},\dots ,x_{i_{m}}^{(m)}\right)
\right\Vert _{Y}^{q}\right) ^{\frac{r_{k}}{q}}\right) ^{\frac{1}{r_{k}}} \\
&\leq \mathrm{A}_{q,r_{k}}^{\left\vert \widehat{C_{k}}\right\vert
}(Y)\left\Vert U^{C_{k}}:X^{\widehat{C_{k}}}\rightarrow \Pi
_{(r_{k},1)}^{\left\vert C_{k}\right\vert }\left( X^{C_{k}};Y\right)
\right\Vert.
\end{align*}

On the other hand, the hypotheses on $q_{1},\dots ,q_{m}$ mean precisely
that $\left( \frac{1}{q_{1}},\dots ,\frac{1}{q_{m}}\right) $ belongs to the
convex hull of the points $\left( \frac{1}{q_{1}(k)},\dots ,\frac{1}{q_{m}(k)%
}\right) ,\ k=1,\dots ,n$, with 
\begin{equation*}
q_{j}(k):=%
\begin{cases}
r_{k},\text{ if }j\in C_{k}; \\ 
q,\text{ if }j\in \widehat{C_{k}},%
\end{cases}%
\end{equation*}%
for $k=1,\dots ,n$. Therefore, the interpolation method of Lemma \ref%
{gen.interp} gives us 
\begin{align*}
& \left( \sum_{\mathbf{i}_{C_{1}}}\left( \sum_{\mathbf{i}_{C_{2}}}\left(
\dots \left( \sum_{\mathbf{i}_{C_{n}}}\left\Vert U\left(
x_{i_{1}}^{(1)},\dots ,x_{i_{m}}^{(m)}\right) \right\Vert
_{Y}^{q_{n}}\right) ^{\frac{q_{n-1}}{q_{n}}}\dots \right) ^{\frac{q_{2}}{%
q_{3}}}\right) ^{\frac{q_{1}}{q_{2}}}\right) ^{\frac{1}{q_{1}}} \\
&\leq \prod_{k=1}^{n}\left[ \left( \sum_{\mathbf{i}_{C_{1}},\dots ,\mathbf{i}%
_{C_{k-1}}}\left( \sum_{\mathbf{i}_{C_{k}}}\left( \sum_{\mathbf{i}%
_{C_{k+1}},\dots ,\mathbf{i}_{C_{n}}}\left\Vert U\left(
x_{i_{1}}^{(1)},\dots ,x_{i_{m}}^{(m)}\right) \right\Vert _{Y}^{q}\right) ^{%
\frac{r_{k}}{q}}\right) ^{\frac{q}{r_{k}}}\right) ^{\frac{1}{q}}\right]
^{\theta _{k}} \\
&\leq \prod_{k=1}^{n}\left[ \mathrm{A}_{q,r_{k}}^{\left\vert \widehat{C_{k}}%
\right\vert }(Y)\left\Vert U^{C_{k}}:X^{\widehat{C_{k}}}\rightarrow \Pi
_{(r_{k},1)}^{\left\vert C_{k}\right\vert }\left( X^{C_{k}};Y\right)
\right\Vert \right] ^{\theta _{k}}.
\end{align*}
\end{proof}

\bigskip

As a particular case of this result, we obtain one of the main results of 
\cite{dps.coord}. Before, we need to recall some technical definitions (see 
\cite[Section 3]{dps.coord}): for $q\geq 2$, let us consider the functions $%
w,f: [1,q)^2\to [0,+\infty)$ defined by 
\begin{equation*}
\omega(x,y) := \frac{q^2\left( x+y\right) -2qxy}{q^{2}-xy} \ \ \ \text{ and }
\ \ \ f(x,y):= \frac{q^2 x-qxy}{q^2\left(x+y\right)-2qxy}.
\end{equation*}
Inductively, one may define $w_n : [1,q)^n\to [0,+\infty)$ by $%
\omega_2\left(x_1,x_2\right) := \omega \left(x_1,x_2\right)$, and, for $n
\geq 3$, 
\begin{equation*}
\omega_n \left(x_1,\dots,x_n\right) := \omega_2 \left(
x_n,\omega_{n-1}\left(x_1,\dots,x_{n-1}\right) \right).
\end{equation*}
We proceed similarly for $f_n := \left(f_n^1,\dots,f_n^n\right) : [1,q)^n
\to [0,+\infty)^n$. First, $f_{2}\left( x_{1},x_{2}\right) :=\left( f\left(
x_{1},x_{2}\right) ,f\left( x_{2},x_{1}\right) \right). $ Inductively, the
function $f_n$ (in $n$ variables $x_1,\dots,x_n$) is defined using $f_{n-1}$
(in the $n-1$ variables $x_1,\dots,x_{n-1}$) by 
\begin{equation*}
f_n^k (x_1,\dots,x_n) := f_{n-1}^k (x_1,\dots,x_{n-1}) \cdot f \left(
\omega_{n-1} \left(x_1,\dots,x_{n-1}\right),x_n \right),\ k=1,\dots n-1,
\end{equation*}
and 
\begin{equation*}
f_n^n (x_1,\dots,x_n) := f \left(
x_n,\omega_{n-1}\left(x_1,\dots,x_{n-1}\right) \right).
\end{equation*}
For any choice of $\left(x_1,\dots,x_n\right) \in [1,q)^n$, it can be
checked by induction that 
\begin{equation*}
\sum_{k=1}^n f_n^k\left(x_1,\dots,x_n\right) =1.
\end{equation*}

\vskip.3cm

Now let us see how to recover the main result of \cite{dps.coord} from
theorem \ref{4.2}.

\begin{corollary}
\label{4.2D} Let $\{1,\dots ,m\}$ be the disjoint union of non-void proper
subsets $C_{1},\dots ,C_{n}$, let $Y$ be a Banach space with cotype $q$, and
suppose that $1\leq r_{1},\dots ,r_{n}<q$. Assume that $U\in \mathcal{L}
\left( X_{1},\dots ,X_{m};Y\right) $ is multiple $\left( r_{k},1\right) $%
-summing in each $C_{k},\,1\leq k\leq n$. Then $U$ is multiple $\left(\omega
_{n},1\right) $-summing, and 
\begin{equation*}
\pi _{\left( \omega _{n},1\right) }^{m}\left( U\right) \leq \sigma_{n}
\prod\limits_{k=1}^{n} \left\Vert U^{C_{k}}:X^{\widehat{C_{k}}}\to \Pi
_{r_{k},1}^{|C_k|}\left( X^{C_{k}};Y\right) \right\Vert^{f_n^k},
\end{equation*}
where $\sigma_n$ is defined by 
\begin{equation*}
\sigma_{2}=\left( \mathrm{A}_{q,r_{1}}^{\left\vert C_{2}\right\vert }\left(
Y\right) \right) ^{f\left( r_{1},r_{2}\right) }\left( \mathrm{A}%
_{q,r_{2}}^{\left\vert C_{1}\right\vert }\left( Y\right) \right) ^{f\left(
r_{2},r_{1}\right) },
\end{equation*}
and for $n\geq 3$ 
\begin{equation*}
\sigma _{n}=\left( \mathrm{A}_{q,r_{n}}^{\left\vert \cup
_{k=1}^{n-1}C_{k}\right\vert }\left( Y\right) \right) ^{f\left( r_{n},\omega
_{n-1}\right) }\left( \mathrm{A}_{q,\omega _{n-1}}^{\left\vert
C_{n}\right\vert }\left( Y\right) \right) ^{f\left( \omega
_{n-1},r_{n}\right) }\sigma _{n-1}^{f\left( \omega _{n-1},r_{n}\right) }.
\end{equation*}
\end{corollary}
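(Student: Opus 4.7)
My plan is to proceed by induction on $n$, invoking Theorem~\ref{4.2} only in its two-subset version at each layer, so that the recursive objects $\omega_n$, $f_n^k$, and $\sigma_n$ emerge step by step from the recursions defining them.

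For the base case $n = 2$, I apply Theorem~\ref{4.2} directly to the partition $\{C_1, C_2\}$ with target exponents $q_1 = q_2 = \omega_2 := \omega(r_1, r_2)$, so that the left-hand side of Theorem~\ref{4.2} collapses into the plain $\ell_{\omega_2}$-norm that witnesses $(\omega_2, 1)$-multiple summability. The convex-hull condition becomes the system $\frac{1}{\omega_2} = \frac{\theta_k}{r_k} + \frac{1-\theta_k}{q}$ for $k = 1, 2$, together with $\theta_1 + \theta_2 = 1$. A direct computation with the defining formula for $\omega$ yields $\theta_k = f(r_k, r_{3-k})$, and the elementary identity $f(x,y) + f(y,x) = 1$ gives the normalization. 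The resulting estimate is exactly the corollary for $n = 2$ with the stated $\sigma_2$.

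For the inductive step, suppose the corollary holds for partitions into $n-1$ non-void proper subsets. Given $C_1, \dots, C_n$, group them as $\widetilde{C}_1 := \bigcup_{k=1}^{n-1} C_k$ and $\widetilde{C}_2 := C_n$. For any fixed vectors $x^{(l)} \in X_l$, $l \in C_n$, the partial map $U^{\widetilde{C}_1}_{x_{C_n}}$ is $|\widetilde{C}_1|$-linear on $X^{\widetilde{C}_1}$ and, by hypothesis on $U$, is multiple $(r_k, 1)$-summing in each $C_k$ ($k \leq n-1$) with
\begin{equation*}
\left\|(U^{\widetilde{C}_1}_{x_{C_n}})^{C_k}\right\| \leq \left\|U^{C_k}\right\| \prod_{l \in C_n} \|x^{(l)}\|.
\end{equation*}
Applying the induction hypothesis to $U^{\widetilde{C}_1}_{x_{C_n}}$ with the sub-partition $C_1, \dots, C_{n-1}$ and exponents $r_1, \dots, r_{n-1}$, and then using $\sum_{k=1}^{n-1} f_{n-1}^k = 1$ to absorb the $\prod_{l \in C_n} \|x^{(l)}\|$ factors, yields the operator-norm estimate
\begin{equation*}
\left\|U^{\widetilde{C}_1} : X^{C_n} \to \Pi_{(\omega_{n-1},1)}^{|\widetilde{C}_1|}\!\left(X^{\widetilde{C}_1}; Y\right)\right\| \leq \sigma_{n-1} \prod_{k=1}^{n-1} \|U^{C_k}\|^{f_{n-1}^k}.
\end{equation*}
Thus $U$ is multiple $(\omega_{n-1}, 1)$-summing in $\widetilde{C}_1$ and $(r_n, 1)$-summing in $\widetilde{C}_2$, so the base-case argument applies once more, this time to $\{\widetilde{C}_1, \widetilde{C}_2\}$ with exponents $(\omega_{n-1}, r_n)$ and target $\omega_n = \omega(\omega_{n-1}, r_n)$. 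The recursive definitions $f_n^k = f_{n-1}^k \cdot f(\omega_{n-1}, r_n)$ for $k \leq n-1$, $f_n^n = f(r_n, \omega_{n-1})$, together with the recursion for $\sigma_n$, are exactly what is needed to rewrite the resulting bound as $\sigma_n \prod_{k=1}^n \|U^{C_k}\|^{f_n^k}$.

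The main obstacle is the book-keeping in the inductive step: confirming that the induction hypothesis applies uniformly in $x_{C_n}$ to the restricted map $U^{\widetilde{C}_1}_{x_{C_n}}$ and that this uniformity cleanly assembles into the operator-norm bound displayed above (the identity $\sum_{k=1}^{n-1} f_{n-1}^k = 1$ is essential here). A subsidiary check is that $\omega_{n-1} \in [1, q)$ at each stage, ensuring the hypotheses of Theorem~\ref{4.2} continue to hold; this follows inductively from the identity $q - \omega(x,y) = \frac{q(q-x)(q-y)}{q^2 - xy}$.
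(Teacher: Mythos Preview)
Your inductive argument is correct and is precisely the route the paper \emph{alludes to} at the end of its own proof (``one need to proceed by induction as described in the proof of \cite[Theorem 4.2]{dps.coord}''). The paper's primary argument is different, however: it applies Theorem~\ref{4.2} once with the full partition $C_1,\dots,C_n$, using the closed-form identities $\omega_n = \frac{qR}{1+R}$ and $f_n^k = \frac{r_k}{R(q-r_k)}$ from \cite{ps} to verify directly that the choice $\theta_k = f_n^k$ forces all $q_k$ to equal $\omega_n$. This one-shot application yields the constant $\prod_{k=1}^n \mathrm{A}_{q,r_k}^{|\widehat{C_k}|}(Y)^{f_n^k}$, which coincides with $\sigma_2$ when $n=2$ but differs from the recursively defined $\sigma_n$ for $n\geq 3$; the paper then notes that the stated $\sigma_n$ is recovered by the induction you carried out. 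Your approach has the virtue of being self-contained (it needs only the recursive definitions of $\omega_n$, $f_n^k$, not the closed forms) and delivers the exact $\sigma_n$ directly; the paper's approach, in exchange, exhibits an alternative constant coming straight from the interpolation lemma and avoids the uniform-in-$x_{C_n}$ book-keeping.
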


\begin{proof}
Using the following formulas for the exponents $\omega_{n}:=\omega_n%
\left(r_1,\dots,r_n\right)$ and $f_n^k:= f_n^k\left(r_1,\dots,r_n\right)$
(see \cite[Theorem 3.2]{ps}) 
\begin{equation*}
\omega_n = \frac{qR}{1+R} \ \ \ \text{ and } \ \ \ f_n^k= \frac{r_k}{R(q-r_k)%
},\, k=1,\dots,n, \ \ \ \text{where} \ \ \ R:=\sum_{k=1}^{n}\frac{r_k}{q-r_k}%
,
\end{equation*}
and taking $\theta_k := f_n^k,\, k=1,\dots,n$, in theorem \ref{4.2}, we get 
\begin{equation*}
\frac{1}{q_k} = \frac{1}{R(q-r_k)} + \frac{1}{q}\left(1-\frac{r_k}{R(q-r_k)}%
\right) = \frac{1+R}{qR} = \frac{1}{\omega_n},
\end{equation*}
for $k=1,\dots,n$. Thus theorem \ref{4.2} guarantees that $U$ is $%
(\omega_n,1)$-summing and 
\begin{equation*}
\pi_{(\omega_n,1)}^{m}(U) \leq \prod_{k=1}^n \left[ \mathrm{A}_{q,r_k}^{|%
\widehat{C_k}|}(Y) \right]^{f_n^k} \cdot \left[ \left\Vert U^{C_k} : X^{%
\widehat{C_k}} \to \Pi_{r_k,1}^{|C_k|} \left(X^{C_k};Y\right) \right\Vert %
\right]^{f_n^k}.
\end{equation*}
This is precisely the result stated, up to the constants $\sigma_n$ for $%
n\geq3$. In order to recover these, one need to proceed by induction as
described in the proof of \cite[Theorem 4.2]{dps.coord}, using that $U$ is
multiple $(\omega_{n-1},1)$-summing in the coordinates of $%
\cup_{k=1}^{n-1}C_k$, and by assumption that $U$ also it is multiple $%
(r_n,1) $-summing in the coordinates of $C_n$.
\end{proof}

The following important special case is highlighted in \cite[Section 3]%
{dps.coord} as an immediate consequence of the previous result.

\begin{corollary}[{\protect\cite[Section 3]{dps.coord}; Corollary 5.2}]
\label{5.2}Let $Y$ be a Banach space with cotype $q$, and $1\leq r<q$. Then
there is a constant $\sigma _{m}\geq1$ such that each separately $\left(
r,1\right) $-summing operator $U\in\mathcal{L}\left(
X_{1},...,X_{m};Y\right) $ is multiple $\left( \frac{qrm}{q+\left(
m-1\right) r},1\right) $-summing, and%
\begin{equation*}
\pi_{\left( \frac{qrm}{q+\left( m-1\right) r},1\right) }^{m}\left( U\right)
\leq\sigma_{m}\prod \limits_{k=1}^{m}\left\Vert U^{\{k\}}:X^{\widehat{\{k\}}%
}\rightarrow\Pi_{r,1}\left( X^{\{k\}};Y\right) \right\Vert ^{\frac{1}{m}}
\end{equation*}
where $\sigma_{m}$, as stated in Corollary \ref{4.2D}, depends on $m,~r,~q$
and $C_{q}\left( Y\right) $.
\end{corollary}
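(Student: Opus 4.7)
The plan is to obtain Corollary~\ref{5.2} as an immediate specialization of Corollary~\ref{4.2D}. The hypothesis that $U$ is separately $(r,1)$-summing means precisely that $U$ is multiple $(r,1)$-summing in each singleton $\{k\}$, so we take the partition of $\{1,\dots,m\}$ into the $n=m$ singletons $C_k=\{k\}$ with common exponent $r_k=r$ for $k=1,\dots,m$, which is exactly the setup required to invoke Corollary~\ref{4.2D}.

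With these choices I would compute the relevant parameters using the closed form
\begin{equation*}
\omega_n=\frac{qR}{1+R},\qquad f_n^k=\frac{r_k}{R(q-r_k)},\qquad R=\sum_{k=1}^n\frac{r_k}{q-r_k},
\end{equation*}
already recalled in the proof of Corollary~\ref{4.2D}. Since every $r_k$ equals $r$ and $n=m$, one gets $R=\frac{mr}{q-r}$, whence
\begin{equation*}
\omega_m=\frac{qR}{1+R}=\frac{qmr}{q+(m-1)r}\quad\text{and}\quad f_m^k=\frac{r}{R(q-r)}=\frac{1}{m}\ \ (k=1,\dots,m).
\end{equation*}
Plugging these values into the conclusion of Corollary~\ref{4.2D} yields the announced exponent $\frac{qrm}{q+(m-1)r}$ and the symmetric product $\prod_{k=1}^m\|U^{\{k\}}:X^{\widehat{\{k\}}}\to\Pi_{r,1}(X^{\{k\}};Y)\|^{1/m}$.

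The remaining ingredient is the constant. Corollary~\ref{4.2D} gives the factor $\sigma_m$ defined by the inductive recursion stated there; since that recursion is driven only by $m$, $r$, $q$ and by $C_q(Y)$ (which enters through $\mathrm{A}_{q,r}(Y)=C_q(Y)\mathrm{K}_{r,2}$), the resulting $\sigma_m\geq 1$ depends precisely on these four parameters, as required. There is no real obstacle here: everything amounts to checking the two algebraic identities above and reading off the statement of Corollary~\ref{4.2D}; the only small bookkeeping item is to make sure that the symmetric repetition $r_1=\cdots=r_m=r$ collapses the asymmetric product $\prod_k\|\cdot\|^{f_m^k}$ into the $1/m$-power product that appears in the statement, which the computation $f_m^k=1/m$ takes care of.
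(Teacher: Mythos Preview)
Your proposal is correct and matches the paper's approach exactly: the paper presents Corollary~\ref{5.2} as an immediate consequence of Corollary~\ref{4.2D} without giving further details, and your specialization $n=m$, $C_k=\{k\}$, $r_k=r$ together with the computations $\omega_m=\frac{qrm}{q+(m-1)r}$ and $f_m^k=\frac{1}{m}$ is precisely how that immediate consequence is obtained.
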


\bigskip

In the next section, we show that the previous result is a particular case
of an even more general theorem.

\section{Multiple $N$-separate summability}

The following definition is a variation of Definition \ref{one}.

\begin{definition}
Let $1\leq r<\infty $. We say that $U\in \mathcal{L} \left(X_1,\dots,X_m;Y%
\right)$ is \emph{$N$-separately $(r,1)$-summing}, when $U$ is multiple $%
\left( r,1\right)$-summing in each subset of $\{1,\dots,m\}$ with
cardinality $N$.
\end{definition}

In other words, $U\in \mathcal{L}\left( X_{1},...,X_{m};Y\right) $ is $N$%
-separately $\left( r,1\right) $-summing if $U$ is multiple $\left(
r,1\right) $-summing in $S\subseteq \{1,\dots ,m\}$, for all $S\in \mathcal{P%
}_{N}(m)$. In this context, $U$ is separately $(r,1)$-summing if and only if 
$U$ is $1$-separately $(r,1)$-summing.

From now on $Y$ is a Banach space with cotype $q$. The following result
extends Corollary \ref{5.2}:

\begin{theorem}
\label{ANSS} Let $1\leq r\leq q$, and $1\leq n<m$. If $U\in \mathcal{L}%
\left( X_{1},...,X_{m};Y\right) $ is $n$-separately $\left( r,1\right) $%
-summing, then $U$ is $N$-separately $\left( r_{N},1\right) $-summing, for
all $n<N\leq m,$ with $r_{N}:=\frac{qrN}{nq+\left( N-n\right) r}$. Moreover,
if $N<m$, we get, for each $D\in \mathcal{P}_{N}\left( m\right) $, 
\begin{equation*}
\pi _{\left( r_{N},1\right) }^{N}\left( U_{x_{\widehat{D}}}^{D}\right) \leq 
\mathrm{A}_{q,r}^{N-n}\left( Y\right) \prod_{S\in \mathcal{P}%
_{n}\left(D\right)} \left\Vert \left( U_{x_{\widehat{D}}}^{D}\right)
^{S}:X^{D\setminus S} \to \Pi _{(r,1)}^{n}\left( X^{S};Y\right) \right\Vert^{%
\frac{1}{\binom{N}{n}}}
\end{equation*}%
for all $x_{\widehat{D}}\in X^{\widehat{D}}$. The estimate for $N=m$ becomes 
\begin{equation*}
\pi _{\left( r_{m},1\right) }^{m}\left( U\right) \leq \mathrm{A}%
_{q,r}^{m-n}\left( Y\right) \prod_{S\in \mathcal{P}_{n}\left( m\right)
}\left\Vert U^{S}:X^{\widehat{S}}\to \Pi _{(r,1)}^{n}\left( X^{S};Y\right)
\right\Vert ^{\frac{1}{\binom{m}{n}}}.
\end{equation*}
\end{theorem}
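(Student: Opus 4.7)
The plan is to fix $D\in\mathcal{P}_N(m)$ and $x_{\widehat D}\in X^{\widehat D}$, and to estimate the multiple $(r_N,1)$-summing norm of the $N$-linear map $V:=U^{D}_{x_{\widehat D}}\colon X^D\to Y$. The argument has three ingredients: transfer the $n$-separate $(r,1)$-summability of $U$ onto $V$, apply Theorem \ref{DPS} one $n$-subset at a time, and combine the resulting $\binom{N}{n}$ mixed-norm estimates via the generalized Blei inequality of Lemma \ref{blei.interp}.

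First, for every $S\in\mathcal{P}_n(D)$ and every $y\in X^{D\setminus S}$, a direct unraveling of the definitions shows that the slice $V^S_y\colon X^S\to Y$ coincides with $U^S_w$, where $w\in X^{\widehat S}$ is the concatenation of $x_{\widehat D}$ (on coordinates $\widehat D$) and $y$ (on coordinates $D\setminus S$). Since $|S|=n$ and $U$ is multiple $(r,1)$-summing in $S$, it follows that $V^S_y\in\Pi^{n}_{(r,1)}(X^S;Y)$, whence $V^S\colon X^{D\setminus S}\to\Pi^{n}_{(r,1)}(X^S;Y)$ is well-defined and bounded. Consequently, Theorem \ref{DPS} (applied to the $N$-linear map $V$ with $S$ as the summing subset, so that its complement inside $D$ has cardinality $N-n$) yields, for any weakly $1$-summable families indexed by $D$,
\begin{equation*}
\left(\sum_{\mathbf{i}_S}\left(\sum_{\mathbf{i}_{D\setminus S}}\bigl\|V(x^{(l_1)}_{i_{l_1}},\dots,x^{(l_N)}_{i_{l_N}})\bigr\|_Y^{q}\right)^{r/q}\right)^{1/r}\leq \mathrm{A}_{q,r}^{N-n}(Y)\,\bigl\|V^S\bigr\|.
\end{equation*}

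Second, I apply Lemma \ref{blei.interp} to the scalar matrix $a_{\mathbf{i}}:=\|V(x^{(l_1)}_{i_{l_1}},\dots,x^{(l_N)}_{i_{l_N}})\|_Y$ with parameters $m\mapsto N$, $k\mapsto n$, $s\mapsto r$; the resulting exponent is $\rho=\tfrac{Nrq}{nq+(N-n)r}=r_N$, and the right-hand side factorises over $\mathcal{P}_n(D)$ as a product of the mixed-norm expressions already bounded, each raised to the power $\tfrac{1}{\binom{N}{n}}$. Substituting the Theorem \ref{DPS} bound into each factor, the constant $\mathrm{A}_{q,r}^{N-n}(Y)$ collects with total exponent one (because $|\mathcal{P}_n(D)|=\binom{N}{n}$) and the operator norms $\|V^S\|$ appear with exponent $\tfrac{1}{\binom{N}{n}}$, which is exactly the asserted estimate for $\pi^{N}_{(r_N,1)}(V)$. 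The case $N=m$ is identical on taking $D=\{1,\dots,m\}$, so that $\widehat D=\emptyset$, $V=U$, and the dependence on $x_{\widehat D}$ disappears.

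The proof is essentially a packaging of Theorem \ref{DPS} with Lemma \ref{blei.interp}; I do not anticipate a deep obstacle, but some care is required at the first step to verify the identification $V^S_y=U^S_w$ correctly, so that $n$-separate $(r,1)$-summability of $U$ on $\{1,\dots,m\}$ truly passes to $n$-separate $(r,1)$-summability of $V$ on $D$. Once this identification is in hand, Theorem \ref{DPS} applies with the right-hand side $\|V^S\|$ rather than a larger quantity, and the Blei-type interpolation of Lemma \ref{blei.interp} concludes.
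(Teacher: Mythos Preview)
Your proposal is correct and follows essentially the same approach as the paper: reduce to the $N$-linear map $V=U^{D}_{x_{\widehat D}}$, transfer $n$-separate $(r,1)$-summability from $U$ to $V$, apply Theorem \ref{DPS} to each $S\in\mathcal{P}_n(D)$, and combine the resulting mixed-norm bounds via Lemma \ref{blei.interp} to obtain the $r_N$ estimate. The only cosmetic difference is the order of presentation (the paper invokes Lemma \ref{blei.interp} before Theorem \ref{DPS}), which is immaterial.
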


\begin{proof}
Firstly, we will prove the result for $n<N<m$. Let $D\in \mathcal{P}%
_{N}\left( m\right)$. Without loss of generality, we may assume that $%
D=\{1,\ldots,N\}$. We must prove that $U^{D}$ has its range in $\Pi _{\left(
r_{N},1\right) }^{\left\vert D\right\vert }\left( X^{D};Y\right)$, that is,
given $x_{\widehat{D}}\in X^{\widehat{D}}$, $U^{D}_{x_{\widehat{D}}} \in \Pi
_{\left( r_{N},1\right) }^{\left\vert D\right\vert }\left( X^{D};Y\right) $.
Clearly, $U^{D}_{x_{\widehat{D}}}$ is bounded and $N$-linear. On the other
hand, since $U$ is $n$-separately $(r,1)$-summing, $U^{D}_{x_{\widehat{D}}}$
is $n$-separately $(r,1)$-summing, \emph{i.e.}, $U^{D}_{x_{\widehat{D}}}$ is 
$(r,1)$-summing in $S\subset D$, for all $S\in \mathcal{P}_{n}(D) = \mathcal{%
P}_{n}(N)$. Let $M$ be a positive integer and $x_{1}^{(k)},%
\ldots,x_{M}^{(k)} \in X_k$ be such that $\left\Vert
\left(x_{j}^{(k)}\right)_{j=1}^{M} \right\Vert_{w,1}\leq 1$, for $%
k=1,\dots,N $. Also, let us set $x_\mathbf{i} :=
\left(x_{i_1}^{(1)},\ldots,x_{i_N}^{(N)}\right) \in X^D$, for $\mathbf{i}
=\left(i_1,\dots,i_N\right) \in \mathcal{M}(N,M) = \{1,\ldots,M\}^N$. Lemma %
\ref{blei.interp} implies 
\begin{equation*}
\left( \sum_{\mathbf{i}} \left\Vert U^{D}_{x_{\widehat{D}}}\left(x_{\mathbf{i%
}}\right) \right\Vert^{r_{N}}\right)^{\frac{1}{r_{N}}} \leq \prod_{S\in 
\mathcal{P}_{n}(N)} \left( \sum_{\mathbf{i}_{S}} \left( \sum_{\mathbf{i}%
_{D\setminus S}} \left\Vert U^{D}_{x_{\widehat{D}}} \left(x_{\mathbf{i}%
}\right)\right\Vert^{q} \right)^{\frac{r}{q}} \right)^{\frac{1}{r}\cdot 
\frac{1}{\binom{N}{n}}},
\end{equation*}
with the sum $\sum_{\mathbf{i}}$ taken over all multi-index $\mathbf{i}%
=\left(i_1,\ldots,i_N\right) \in \mathcal{M}(N,M)$. Since $U^{D}_{x_{%
\widehat{D}}}$ is $n$-separately $(r,1)$-summing, Theorem \ref{DPS} assures
that 
\begin{equation*}
\left( \sum_{\mathbf{i}_{S}}\left( \sum_{\mathbf{i}_{D\setminus S}}
\left\Vert U^{D}_{x_{\widehat{D}}} \left(x_{\mathbf{i}}\right) \right\Vert
^{q}\right) ^{\frac{r}{q}}\right) ^{\frac{1}{r}} \leq \mathrm{A}%
_{q,r}^{N-n}\left( Y\right) \left\Vert \left( U^{D}_{x_{\widehat{D}}}
\right) ^{S}:X^{D\setminus S}\rightarrow \Pi _{(r,1)}^{n}\left(
X^{S};Y\right) \right\Vert .
\end{equation*}
Therefore, 
\begin{equation*}
\left( \sum_{\mathbf{i}} \left\Vert U^{D}_{x_{\widehat{D}}}\left(x_{\mathbf{i%
}}\right) \right\Vert^{r_{N}}\right)^{\frac{1}{r_{N}}} \leq \mathrm{A}%
_{q,r}^{N-n}(Y) \prod_{S\in \mathcal{P}_{n}(N)} \left\Vert \left( U^{D}_{x_{%
\widehat{D}}}\right)^S: X^{D\setminus S}\rightarrow \Pi _{(r,1)}^{n}
\left(X^{S};Y\right) \right\Vert^\frac{1}{\binom{N}{n}},
\end{equation*}
and this conclude the result for $n<N<m$. For $N=m$, one just need to work
with the maps $U^{S}: X^{\widehat{S}} \rightarrow \Pi _{(r,1)}^{n}
\left(X^{S};Y\right)$, for each $S\in \mathcal{P}_{n}(m)$, and follows the
lines of the previous argument.
\end{proof}

Notice that if $U$ is $1$-separately $\left(r,1\right)$-summing, then it is $%
N$-separately $\left( \frac{qrN}{q+\left( N-1\right) r},1\right)$-summing
for all $1\leq N\leq m$. To recover Corollary \ref{5.2}, i.e., $U\in 
\mathcal{L}\left(X_{1},\ldots,X_{m};Y\right)$ is multiple $\left( \frac{qrm}{%
q+\left(m-1\right) r},1\right)$-summing, $U$ just need to be $n$-separately $%
\left(s,1\right)$-summing for some $1\leq n<m$ and $s \leq \frac{qrn}{%
q+\left(n-1\right)r}$.

\vskip.1cm

We observe that, in some special cases, our approach provides better
exponents. In fact, let $1<n<N\leq m$ and suppose that $U$ is $n$-separately 
$\left( r,1\right) $-summing. Let $k,l\in \mathbb{N}$, with $l<n,$ be such
that $N=kn+l$. Thus, if $l\neq 0$, given $S\in \mathcal{P}_{N}(m)$, we may
choose $C_{1},\dots,C_{k}\in \mathcal{P}_{n}(m)$ and $C_{k+1}\in \mathcal{P}%
_{l}(m)$ such that 
\begin{equation*}
\bigcup_{j=1}^{k+1}C_{j}=S
\end{equation*}
with this union be disjoint. Clearly, since $l<n$ we conclude that $U$ is
multiple $\left( r,1\right) $-summing in the coordinates of $C_{k+1}$ and
(using the hypothesis) $U$ is multiple $\left( r,1\right) $-summing in the
coordinates of $C_{j}$ for $1\leq j\leq k$. Since $\omega _{k+1}\left(
r,\dots ,r\right) =\frac{q\left( k+1\right) r}{q+kr}$, using \cite[Theorem
5.1]{dps.coord} and the arbitrariness of $S\in \mathcal{P}_{N}(m)$, one may
conclude that $U$ is $N$-separately $\left(\frac{q\left(k+1\right) r}{q+kr}%
,1\right)$-summing. Nevertheless, Theorem \ref{ANSS} assures that $U$ is $N$%
-separately $\left(\frac{qrN}{nq+(N-n)r},1\right)$-summing. Note that, since 
$l \neq 0$, 
\begin{equation*}
\frac{q\left( k+1\right) r}{q+kr}>\frac{qrN}{nq+\left( N-n\right) r}.
\end{equation*}
If $l=0$, we will obtain that $\omega _{k}\left( r,\dots ,r\right) =\frac{qrk%
}{q+\left( k-1\right) r}=\frac{qrN}{nq+\left( N-n\right) r}$. Therefore, the
exponent provided by Theorem \ref{ANSS} is more efficient.

\vskip.1cm

As a final remark we note that Theorem \ref{ANSS} is also useful to provide
estimates for the constants involved. For instance, if we take $%
X_{1}=\cdots=X_{m}=c_{0}$ and $Y=\mathbb{K}$, we obtain better estimates to
the constants of some variation of Bohnenblust-Hille inequalities introduced
in \cite[Appendix A]{nps} and \cite{npss}. More precisely, it shows that for
all parameters $1 \leq t<2$ and all $m\in \mathbb{N}$, there exists a
constant $C_{m,t}^{\mathbb{K}}\geq 1$, such that, 
\begin{equation*}
\left( \sum_{i_{1},...,i_{m}=1}^{\infty }\left\vert U\left(
e_{i_{1}},...,e_{i_{m}}\right) \right\vert ^{\frac{2tm}{2+\left( m-1\right) t%
}}\right) ^{\frac{2+\left( m-1\right) t}{2tm}}\leq C_{m,t}^{\mathbb{K}%
}\left\Vert U\right\Vert ,
\end{equation*}
for all $m$-linear forms $U:c_{0}\times \cdot \cdot \cdot \times
c_{0}\rightarrow \mathbb{K\,\ }$, with 
\begin{equation}
C_{m,t}^{\mathbb{K}}=\left\{ 
\begin{array}{ll}
1 & \text{ if }m=1, \\ 
\left( \mathrm{A}_{\frac{2mt}{\left( m-2\right) t+4}}^{\mathbb{K}}\right) ^{%
\frac{m}{2}}C_{\frac{m}{2},t}^{\mathbb{K}} & \text{ if }m\text{ is even, and}
\\ 
\left( \left( \mathrm{A}_{\frac{2\left( m-1\right) t}{\left( m-3\right) t+4}%
}^{\mathbb{K}}\right) ^{\frac{m+1}{2}}C_{\frac{m-1}{2},t}^{\mathbb{K}
}\right) ^{\frac{m-1}{2m}}\left( \left( \mathrm{A}_{\frac{2\left( m+1\right)
t}{\left( m-1\right) t+4}}^{\mathbb{K}}\right) ^{\frac{m-1}{2}}C_{\frac{m+1}{%
2},t}^{\mathbb{K}}\right) ^{\frac{m+1}{2m}} & \text{ if }m\text{ is odd.}%
\end{array}
\right.  \label{vari}
\end{equation}

This can be easily inserted in the context of multiple multilinear forms:
for each parameter $t \in [1,2)$, we have a coincidence result for $m$%
-linear maps 
\begin{equation*}
\mathcal{L}\left(c_0,\ldots,c_0;\mathbb{K}\right) = \Pi_{\left(\frac{2tm}{%
2+(m-1)t},1\right)}^{m} \left( c_0,\ldots,c_0;\mathbb{K} \right),
\end{equation*}
which means that every bounded $m$-linear forms $U:c_0 \times \cdots \times
c_0 \to \mathbb{K}$ is multiple $\left( \frac{2tm}{2+\left(m-1\right)t}%
,1\right)$-summing. Moreover, the following norm estimates holds: 
\begin{equation*}
\pi_{\left(\frac{2tm}{2+(m-1)t},1\right)}^{m} (U) \leq C_{m,t}^{\mathbb{K}%
}\left\Vert U\right\Vert.
\end{equation*}

Combining this with Theorem \ref{ANSS}, the following estimates for the
variants of Bohnenblust-Hille inequality arises.

\begin{theorem}
Let $1\leq t<2$ and $m>1$. Then 
\begin{equation*}
\left( \sum_{i_{1},\dots ,i_{m}=1}^{\infty } \left\vert
U\left(e_{i_{1}},\dots ,e_{i_{m}}\right) \right\vert ^{\frac{2tm}{2+(m-1)t}}
\right)^{\frac{2+(m-1)t}{2tm}} \leq C_{m,t}^{\mathbb{K}} \left\Vert
U\right\Vert,
\end{equation*}
for all bounded $m$-linear forms $U:c_{0}\times \cdots \times c_{0} \to 
\mathbb{K}$, with 
\begin{equation*}
C_{m,t}^{\mathbb{C}} \leq \prod_{j=2}^{m}\Gamma \left(2-\frac{2-t}{2+t(j-2)}%
\right)^{-\frac{2+t(j-2)}{2t(j-1)}},
\end{equation*}
and 
\begin{equation*}
C_{m,t}^{\mathbb{R}} \leq 
\begin{cases}
2^{\left(\frac1t-\frac12\right)\cdot\sum_{j=1}^{m-1}\frac1j} ,\,\text{ if }
m \leq \frac{2p_0+2t(1-p_0)}{t(2-p_0)}; \\ 
\left[ \prod_{j=2}^{m_{0}}2^{\frac{t+2m_{0}-2tm_{0}+mt+jtm_{0}-jmt-2}{%
2t\left( m_{0}-1\right) \left( j-1\right) }}\right] \cdot \left[
\prod_{j=m_{0}+1}^{m} \left( \frac{1}{\sqrt{\pi }}\Gamma \left( \frac{3}{2}- 
\frac{2-t}{2+t(j-2)}\right) \right)^{\frac{t(j-2)+2}{2t-2jt}}\right] ,\, 
\text{ if } m > \frac{2p_0+2t(1-p_0)}{t(2-p_0)};%
\end{cases}%
\end{equation*}
where $m_{0}$ is the largest integer not greater than $\frac{2p_0+2t(1-p_0)}{%
t(2-p_0)}$.
\end{theorem}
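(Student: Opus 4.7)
The inequality itself falls out of the coincidence $\mathcal{L}(c_0,\ldots,c_0;\mathbb{K}) = \Pi_{(r_m,1)}^{m}(c_0,\ldots,c_0;\mathbb{K})$, with $r_m := \tfrac{2tm}{2+(m-1)t}$, recalled just above. Indeed $\|(e_j)_{j=1}^{N}\|_{w,1} = 1$ in $c_0$, so testing the multiple summing norm on the canonical basis gives
\begin{equation*}
\left(\sum_{i_1,\ldots,i_m=1}^{\infty} |U(e_{i_1},\ldots,e_{i_m})|^{r_m}\right)^{1/r_m} \leq \pi_{(r_m,1)}^{m}(U) \leq C_{m,t}^{\mathbb{K}}\|U\|.
\end{equation*}
Thus the whole task reduces to bounding $C_{m,t}^{\mathbb{K}}$ by the stated expression.

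The key step is the one-step recursion $C_{m,t}^{\mathbb{K}} \leq \mathrm{A}_{r_{m-1}}^{\mathbb{K}}\cdot C_{m-1,t}^{\mathbb{K}}$. I would derive it by feeding the result at level $m-1$ into Theorem \ref{ANSS}, with $q = 2$ (the cotype of $\mathbb{K}$, so $C_2(\mathbb{K}) = 1$ and $\mathrm{A}_{q,r}(\mathbb{K}) = \mathrm{K}_{r,2} = \mathrm{A}_r^{\mathbb{K}}$), $n = m-1$, $N = m$ and $r = r_{m-1}$. A short computation verifies that the output exponent $\tfrac{qrN}{nq+(N-n)r}$ collapses to $r_m$. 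For each $S \in \mathcal{P}_{m-1}(m)$ the restriction $U^{S}_{x_{\widehat{S}}}$ is an $(m-1)$-linear form on $c_0^{m-1}$, hence by the coincidence at level $m-1$ satisfies $\pi_{(r_{m-1},1)}^{m-1}(U^S_{x_{\widehat{S}}}) \leq C_{m-1,t}^{\mathbb{K}}\|U^S_{x_{\widehat{S}}}\| \leq C_{m-1,t}^{\mathbb{K}}\|U\|$; in other words $U$ is $(m-1)$-separately $(r_{m-1},1)$-summing with $\|U^S\| \leq C_{m-1,t}^{\mathbb{K}}\|U\|$. Since $|\widehat{S}| = 1$, Theorem \ref{ANSS} immediately yields $\pi_{(r_m,1)}^{m}(U) \leq \mathrm{A}_{r_{m-1}}^{\mathbb{K}}\,C_{m-1,t}^{\mathbb{K}}\|U\|$. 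Iterating from $C_{1,t}^{\mathbb{K}} = 1$ gives $C_{m,t}^{\mathbb{K}} \leq \prod_{k=1}^{m-1} \mathrm{A}_{r_k}^{\mathbb{K}}$.

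For $\mathbb{K} = \mathbb{C}$ I would substitute (\ref{kincomp}), $\mathrm{A}_p^{\mathbb{C}} = \Gamma\bigl(\tfrac{p+2}{2}\bigr)^{-1/p}$ with $p = r_k$, and re-index by $j = k+1$. The elementary identities $\tfrac{r_{j-1}+2}{2} = 2 - \tfrac{2-t}{2+t(j-2)}$ and $\tfrac{1}{r_{j-1}} = \tfrac{2+t(j-2)}{2t(j-1)}$ convert the product directly into the $\Gamma$-expression announced in the theorem. For $\mathbb{K} = \mathbb{R}$ the analysis splits at the Haagerup threshold: since $r_k$ is strictly increasing in $k$, solving $r_k \leq p_0$ gives $k \leq \tfrac{p_0(2-t)}{t(2-p_0)} = \tfrac{2p_0+2t(1-p_0)}{t(2-p_0)} - 1$, so $r_k \leq p_0$ precisely for $k \leq m_0 - 1$. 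When $m \leq m_0$ every factor uses (\ref{kinreal2}), and the identity $\tfrac{1}{r_k}-\tfrac12 = \tfrac{1}{k}\bigl(\tfrac1t-\tfrac12\bigr)$ collapses the exponent into a harmonic sum, recovering the first branch of the statement. When $m > m_0$, the factors for $k \geq m_0$ use (\ref{kinreal}) and, after re-indexing, produce the second product in the statement together with $m-m_0$ extra factors of $1/\sqrt{2}$; those are then redistributed uniformly (as $-\tfrac{m-m_0}{2(m_0-1)}$ per index $j = 2,\ldots,m_0$) into the exponents coming from (\ref{kinreal2}), producing the numerator $(2-t)(m_0-1) - (j-1)t(m-m_0)$ written in the statement after clearing denominators.

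The main obstacle is this last bookkeeping step in the second regime of the real case: one must verify that the advertised exponent $\tfrac{t+2m_0-2tm_0+mt+jtm_0-jmt-2}{2t(m_0-1)(j-1)}$ really decomposes as $\tfrac{1}{r_{j-1}} - \tfrac12 - \tfrac{m-m_0}{2(m_0-1)}$, and that summing over $j = 2,\ldots,m_0$ reconstitutes $\bigl(\tfrac1t - \tfrac12\bigr)\sum_{k=1}^{m_0-1}\tfrac1k$ together with the $-(m-m_0)/2$ needed to absorb the accumulated $\sqrt{2}$-factors from the second product. This is purely computational but requires careful index tracking.
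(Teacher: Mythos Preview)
Your proposal is correct and follows essentially the same route as the paper: apply Theorem~\ref{ANSS} with $q=2$, $n=m-1$, $N=m$, $r=r_{m-1}$ to obtain the recursion $C_{m,t}^{\mathbb{K}}\le \mathrm{A}_{r_{m-1}}^{\mathbb{K}}\,C_{m-1,t}^{\mathbb{K}}$, iterate to $C_{m,t}^{\mathbb{K}}\le\prod_{k=1}^{m-1}\mathrm{A}_{r_k}^{\mathbb{K}}$, and then substitute the Haagerup/K\"onig constants. One small wording issue: the identity $\mathrm{K}_{r,2}=\mathrm{A}_r^{\mathbb{K}}$ is not literally true (the Kahane constant is universal over all Banach spaces), but what you need---and what the paper also invokes---is only that in the scalar case the Khinchine constant $\mathrm{A}_r^{\mathbb{K}}$ may be used in place of $\mathrm{A}_{2,r}(\mathbb{K})$ throughout the argument of Theorem~\ref{DPS}.
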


\begin{proof}
In our context we have that, for $t\in \lbrack 1,2)$ and $m\geq 1$, every
bounded $m$-linear forms $U:c_{0}\times \cdots \times c_{0}\rightarrow 
\mathbb{K}$ is $n$-separately $\left( \frac{2tn}{2+(n-1)t},1\right) $%
-summing, for all $1\leq n\leq m$. Thus, by considering $n=m-1$ and using
that $U\mathbb{\,}$ is $(m-1)$-separately $\left( \frac{2t(m-1)}{2+(m-2)t}%
,1\right) $-summing, we invoke Theorem \ref{ANSS} to conclude that $U$ is
multiple $\left( \frac{2tm}{2+(m-1)t},1\right) $-summing and 
\begin{equation*}
\pi _{\left( \frac{2tm}{2+\left( m-1\right) t},1\right) }^{m}\left( U\right)
\leq \mathrm{A}_{2,\frac{2t\left( m-1\right) }{2+\left( m-2\right) t}}\left( 
\mathbb{K}\right) \prod_{S\in P_{m-1}\left( m\right) }\left\Vert U^{S}:X^{%
\widehat{S}}\rightarrow \Pi _{\left( \frac{2t\left( m-1\right) }{2+\left(
m-2\right) t},1\right) }^{m-1}\left( X^{S};\mathbb{K}\right) \right\Vert ^{%
\frac{1}{\binom{m}{m-1}}}.
\end{equation*}%
Since for $Y=\mathbb{K}$, we can use $\mathrm{A}_{\frac{2t(m-1)}{2+(m-2)t}}^{%
\mathbb{K}}$ instead of $\mathrm{A}_{2,\frac{2t(m-1)}{2+(m-2)t}}(\mathbb{K}%
), $ and 
\begin{align*}
\left\Vert U^{S}:X^{\widehat{S}}\rightarrow \Pi _{\left( \frac{2t\left(
m-1\right) }{2+\left( m-2\right) t},1\right) }^{m-1}\left( X^{S};\mathbb{K}%
\right) \right\Vert & =\sup_{x\in B_{X^{\widehat{S}}}}\pi _{\left( \frac{%
2t\left( m-1\right) }{2+\left( m-2\right) t},1\right) }^{m-1}\left(
U_{x}^{S}\right) \\
& \leq C_{m-1,t}^{\mathbb{K}}\sup_{x\in B_{X^{\widehat{S}}}}\left\Vert
U_{x}^{S}\right\Vert \\
& \leq C_{m-1,t}^{\mathbb{K}}\left\Vert U\right\Vert ,
\end{align*}%
we get 
\begin{equation*}
\pi _{\left( \frac{2tm}{2+\left( m-1\right) t},1\right) }^{m}\left( U\right)
\leq A_{\frac{2t\left( m-1\right) }{2+\left( m-2\right) t}}^{\mathbb{K}%
}C_{m-1,t}^{\mathbb{K}}\left\Vert U\right\Vert .
\end{equation*}%
Thus, 
\begin{equation*}
C_{m,t}^{\mathbb{K}}\leq A_{\frac{2t\left( m-1\right) }{2+\left( m-2\right) t%
}}^{\mathbb{K}}C_{m-1,t}^{\mathbb{K}}.
\end{equation*}%
Proceeding by induction and using that $C_{1,t}^{\mathbb{K}}=1$, we obtain 
\begin{equation*}
C_{m,t}^{\mathbb{K}}=%
\begin{cases}
1,\,\text{ if }m=1; \\ 
\prod\limits_{k=1}^{m-1}\mathrm{A}_{\frac{2tk}{2+\left( k-1\right) t}}^{%
\mathbb{K}},\,\text{ if }m>1.%
\end{cases}%
\end{equation*}%
Finaly, using (\ref{kinreal}), (\ref{kinreal2}) and (\ref{kincomp}), we
obtain the result.
\end{proof}

By considering $t=1$, we recover the Bohnenblust-Hille constants presented
in \cite[Proposition 3.1]{bps} and a direct calculation shows that the above
theorem improves (\ref{vari}). Proceeding as in \cite[Corollary 3.2 and
Corollary 3.3]{bps}, we have an alternative formula that highlights the
asymptotic behavior of the constants.

\begin{theorem}
For any $t\in \lbrack 1,2)$, there exists $\kappa _{t,\mathbb{K}}>0$ such
that, for any $m\geq 1$, 
\begin{equation*}
C_{m,t}^{\mathbb{C}}\leq \kappa _{t,\mathbb{C}}m^{\frac{\left( \gamma
-1\right) \left( t-2\right) }{2t}},
\end{equation*}%
and 
\begin{equation*}
C_{m,t}^{\mathbb{R}}\leq \kappa _{t,\mathbb{R}}m^{\frac{\left( \gamma -2+\ln
2\right) \left( t-2\right) }{2t}}.
\end{equation*}
\end{theorem}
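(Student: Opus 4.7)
The plan is to start from the explicit formula
\begin{equation*}
C_{m,t}^{\mathbb{K}} \leq \prod_{k=1}^{m-1} \mathrm{A}_{p_k}^{\mathbb{K}}, \qquad p_k := \frac{2tk}{2+(k-1)t},
\end{equation*}
provided by the previous theorem, and take logarithms. Since $2-p_k = 2(2-t)/(2+(k-1)t) = O(1/k)$ and $\mathrm{A}_2^{\mathbb{K}}=1$ in both fields, each factor tends to one and the task reduces to a first-order Taylor expansion of $\log\mathrm{A}_{p_k}^{\mathbb{K}}$ around $p=2$ followed by harmonic summation. Any finite initial block, together with the uniformly summable $O(1/k^2)$ tail, will be absorbed into the constant $\kappa_{t,\mathbb{K}}$.

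For complex scalars I would apply (\ref{kincomp}) together with $\Gamma(2)=1$ and $\psi(2)=1-\gamma$ to obtain
\begin{equation*}
\log \mathrm{A}_{p_k}^{\mathbb{C}} = -\frac{1}{p_k}\log\Gamma\!\left(\frac{p_k+2}{2}\right) = \frac{(1-\gamma)(2-t)}{2t\,k} + O(1/k^2).
\end{equation*}
Summing over $k=1,\dots,m-1$ and using $\sum_{k=1}^{m-1} 1/k = \log m + O(1)$ yields $\log C_{m,t}^{\mathbb{C}} \leq \frac{(1-\gamma)(2-t)}{2t}\log m + O(1)$, which exponentiates to the claim after rewriting $(1-\gamma)(2-t) = (\gamma-1)(t-2)$.

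For real scalars the product of the previous theorem is already split at $m_0$ (which depends only on $t$), so the initial block $\prod_{k<m_0}$ goes into $\kappa_{t,\mathbb{R}}$. For the tail I would Taylor-expand using (\ref{kinreal}), $\Gamma(3/2)=\sqrt{\pi}/2$, and $\psi(3/2)=2-\gamma-2\log 2$. The zeroth-order term in $\log\mathrm{A}_{p_k}^{\mathbb{R}}$ vanishes because $\mathrm{A}_2^{\mathbb{R}}=1$; combining the $1/k$ contributions arising both from the Gamma factor and from the deviation $\tfrac{1}{p_k}-\tfrac{1}{2} = \tfrac{1}{k}(\tfrac{1}{t}-\tfrac{1}{2})$ of the exponent, the surviving coefficient collapses to $\frac{(2-t)(2-\gamma-\log 2)}{2t}$. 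Harmonic summation and the identity $(2-t)(2-\gamma-\log 2) = (\gamma-2+\log 2)(t-2)$ then give the stated real bound.

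The main obstacle is precisely this zeroth-order cancellation in the real case: treated naively, each factor of (\ref{kinreal}) leaves behind an $O(1)$ constant and the product would blow up geometrically in $m$. The cancellation must be tracked cleanly through the expansion, which is exactly why the two competing constants $-\tfrac{1}{2}\log 2$ and $\tfrac{1}{p_k}\log 2$ appearing in $\log \mathrm{A}_{p_k}^{\mathbb{R}}$ must be combined before extracting the logarithmic order. Uniform control of the $O(1/k^2)$ remainders is automatic because $\sum 1/k^2 < \infty$ and the constants in the $O$-terms are continuous functions of $t\in[1,2)$.
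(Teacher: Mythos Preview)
Your proposal is correct and follows exactly the route the paper intends: the paper gives no self-contained proof here but refers to \cite[Corollaries~3.2 and~3.3]{bps}, whose argument is precisely the one you outline---take logarithms of the product $\prod_{k=1}^{m-1}\mathrm{A}^{\mathbb{K}}_{p_k}$, Taylor-expand each term about $p=2$ using the digamma values $\psi(2)=1-\gamma$ and $\psi(3/2)=2-\gamma-2\ln 2$, and sum the resulting $1/k$ main terms against the harmonic series. Your identification of the zeroth-order cancellation in the real case (stemming from $\mathrm{A}^{\mathbb{R}}_{2}=1$, equivalently $\Gamma(3/2)=\sqrt{\pi}/2$) and of the two $1/k$ contributions that combine to give the coefficient $\frac{(2-t)(2-\gamma-\ln 2)}{2t}$ is exactly the mechanism; the $O(1/k^{2})$ remainders are indeed summable uniformly in $m$ and go into $\kappa_{t,\mathbb{K}}$.
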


\end{document}